\DeclareFontFamily{OT1}{rsfs}{}
\DeclareFontShape{OT1}{rsfs}{n}{it}{<-> rsfs10}{}
\DeclareMathAlphabet{\mathscr}{OT1}{rsfs}{n}{it}
\newtheorem{theorem}{Theorem}[section]
\newtheorem{lemma}[theorem]{Lemma}
\newtheorem{corol}[theorem]{Corollary}
\newtheorem{prop}[theorem]{Proposition}
\newtheorem{claim}[theorem]{Claim}
\theoremstyle{definition} \newtheorem{defin}[theorem]{Definition}}
\theoremstyle{remark} \newtheorem{remark}[theorem]{Remark}
\numberwithin{equation}{section}
\newcommand{\Abb}{{\mathbb{A}}}
\newcommand{\Cbb}{{\mathbb{C}}}
\newcommand{\Pbb}{{\mathbb{P}}}
\newcommand{\Qbb}{{\mathbb{Q}}}
\newcommand{\Rbb}{{\mathbb{R}}}
\newcommand{\Zbb}{{\mathbb{Z}}}
\newcommand{\cL}{{\mathscr L}}
\newcommand{\cN}{{\mathscr N}}
\newcommand{\cO}{{\mathscr O}}
\newcommand{\ua}{\underline a}
\newcommand{\uDelta}{\underline\Delta}
\newcommand{\Til}[1]{{\widetilde{#1}}}
\newcommand{\id}{\text{id}}
\DeclareMathOperator{\Vol}{Vol}
\newcommand{\qede}{\hfill$\lrcorner$}
\title{
Newton-Okounkov bodies and Segre classes
}
\author{Paolo Aluffi}
\address{
Mathematics Department, 
Florida State University,
Tallahassee FL 32306, U.S.A.
}
\email{aluffi@math.fsu.edu}
\begin{document}

\begin{abstract}
Given a homogeneous ideal in a polynomial ring over $\Cbb$, we 
adapt the construction of Newton-Okounkov bodies to obtain a convex subset of Euclidean
space such that a suitable integral over this set computes the {\em Segre zeta function\/} 
of the ideal. That is, we extract the numerical information of
the Segre class of a subscheme of projective space from an associated (unbounded)
Newton-Okounkov convex set. The result generalizes to arbitrary subschemes of projective
space the numerical form of a previously known result for monomial schemes. 
\end{abstract}

\maketitle


\section{Introduction}\label{s:intro}
\subsection{}
Seminal work of R.~Lazarsfeld, M.~Musta{\c{t}}{\u{a}} (\cite{MR2571958}) and K.~Kaveh, 
A.~Khovanskii (\cite{MR2950767})  began the systematic study of {\em Newton-Okounkov
bodies\/} associated with (for instance) linear systems on a variety. One of the remarkable
features of the theory is a very compelling expression for the {\em intersection index\/} of 
a linear system. Roughly speaking, the intersection index of a linear system $L$ on an 
$n$-dimensional variety $V$ is the number of points of intersection of $n$ general elements 
of $L$, where one discards intersections occurring along the base locus of $L$. Kaveh and 
Khovanskii prove (\cite[Theorem~4.9]{MR2950767}) that, modulo important technicalities, 
this index equals the normalized volume of the corresponding Newton-Okounkov body. 
This theorem may be viewed as a vast generalization of the classical Kushnirenko theorem 
on the number of solutions in a torus of a system of general equations with given Newton 
polytope.

On the other hand, the intersection index of a linear system admits a transparent interpretation
in terms of standard intersection theory (this is observed in~\cite[\S7]{MR2722802}). The 
Fulton-MacPherson approach to intersection theory may then be used to express this index 
in terms of the {\em Segre class\/} (\cite[Chapter~4]{85k:14004}) of the base scheme of the 
system in any completion of~$V$. In fact, Segre classes are a considerably more refined type
of information: for example, arbitrary intersection products (not just intersection {\em numbers\/}) 
may be defined by means of Segre classes (\cite[Proposition~6.1(a)]{85k:14004}).
In view of the theorem of Kaveh and Khovanskii mentioned above, it is natural to ask whether
these more refined objects can be computed from a suitably constructed Newton-Okounkov
body.

In the case of the Kushnirenko theorem, this is indeed the case. The main result 
of~\cite{MR3576538} expresses the Segre class of a {\em monomial\/} ideal in terms of
a certain integral evaluated on the Newton polytope determined by the ideal. The result
yields (yet) another proof of Kushnirenko's theorem, generalizing it in a different
direction than the Kaveh-Khovanskii result. Thus, we have two generalizations of 
Kushnirenko's theorem:
\[
\xymatrix{
{\text{Kaveh-Khovanskii}} \ar@{=>}[r] & {\text{Kushnirenko}} \\
{\boxed{???}} \ar@{==>}[u] \ar@{==>}[r] & {\text{[Alu16]}} \ar@{=>}[u]
}
\]
The goal of this note is to fill in this diagram for arbitrary subschemes of projective space. 
Given a homogeneous ideal $I$ of $\Cbb[x_0,\dots, x_n]$, defining a subscheme $X$ of 
$\Pbb^n=\Pbb^n_\Cbb$, 
we construct an associated Newton-Okounkov body $\Delta(I)\subseteq \Rbb^{n+1}$ and 
prove that the push-forward of $s(X,\Pbb^n)$ to $\Pbb^n$ is evaluated by a suitable integral 
over this body. The result may be viewed as a common generalization 
of~\cite[Theorem~4.9]{MR2950767}
and of a `numerical' form of~\cite{MR3576538}, for subschemes of projective spaces.
(We note that \cite[Theorem~4.9]{MR2950767} is a key ingredient in the proof.) 
More precisely, the result is as follows.

\begin{theorem}\label{thm:segre1in}
Denote by $\iota$ the inclusion $X\hookrightarrow \Pbb^n$, and let $h$
be the hyperplane class in $\Pbb^n$. Define a power series $\sum \rho_i t^i$
by the identity
\[
\int_{\Delta(I)} \frac{(n+1)! t^{n+1} da_0\cdots da_n}
{(1+(a_0+\cdots + a_n)t)^{n+2}} = \sum_{i\ge 0} \rho_i t^i\quad.
\]
Then
\[
\iota_* s(X,\Pbb^n) = \left(1-\sum_{i=0}^n \rho_i h^i\right) \cap [\Pbb^n]\quad.
\]
\end{theorem}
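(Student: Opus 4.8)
The plan is to reduce the asserted identity, via slicing, to a statement about the $n$-dimensional volumes of the hyperplane sections of $\Delta(I)$, and then to evaluate those volumes by combining the Kaveh-Khovanskii theorem on a resolution of $I$ with the blow-up description of the Segre class.

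First I would put the left-hand side in numerical form. Write $\iota_*s(X,\Pbb^n)=\sum_{j=0}^{n-1}s_j[\Pbb^j]$; there is no $[\Pbb^n]$-term because $X$ is a proper subscheme of $\Pbb^n$. Since $h^i=0$ for $i>n$ and $s_j=\int_{\Pbb^n}h^{n-j}\cap\iota_*s(X,\Pbb^n)$, the identity to be proved is equivalent to $\rho_0=1$ and $\rho_i=-s_{n-i}$ for $1\le i\le n$ (the $\rho_i$ with $i>n$ play no role).

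Next I would treat the right-hand side. Let $\phi(e):=\Vol_n\!\big(\Delta(I)\cap\{a_0+\cdots+a_n=e\}\big)$ be the lattice-normalized $n$-volume of the slice. By the coarea formula — the factor $\sqrt{n+1}$ coming from the gradient of $a_0+\cdots+a_n$ cancelling the covolume of the slice lattice — the defining integral equals $\int_0^\infty (n+1)!\,t^{n+1}\phi(e)\,(1+et)^{-(n+2)}\,de$. For a power $\phi(e)=e^j$ with $0\le j\le n$ the substitution $u=et$ together with the Beta integral gives $\int_0^\infty(n+1)!\,t^{n+1}e^j(1+et)^{-(n+2)}\,de=j!\,(n-j)!\,t^{n-j}$, and the part of $\phi$ not captured by its eventual polynomial behaviour contributes only to coefficients of $t^i$ with $i\ge n+1$. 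Hence, writing $\phi(e)=\sum_{j=0}^n c_je^j$ for $e\gg0$, one gets $\rho_i=c_{n-i}\,(n-i)!\,i!$ for $0\le i\le n$, and the theorem is reduced to
\[
\phi(e)=\frac1{n!}\Big(e^n-\sum_{j=0}^{n-1}\binom nj s_j e^j\Big)\qquad(e\gg0)\,.
\]
To establish this I would use that $\Delta(I)$ is built so that its slice $\Delta(I)\cap\{a_0+\cdots+a_n=e\}$ is the Newton-Okounkov body of the complete linear system $|L_e|$ on a resolution $\pi\colon V\to\Pbb^n$ on which $I\cdot\cO_V=\cO_V(-E)$ is invertible, where $L_e=\pi^*\cO(e)\otimes\cO_V(-E)$ and the chosen rank-$n$ valuation records the full exponent vector of the leading monomial (so its values on sections of $L_e$ lie in $\{a_0+\cdots+a_n=e\}$). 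For $e\gg0$ the bundle $L_e$ is ample and globally generated — relative ampleness of $-E$ plus the pullback of an ample class, and global generation of $\cI_X(e)$ — so by \cite[Theorem~4.9]{MR2950767} one has $n!\,\phi(e)$ equal to the intersection index of $|L_e|$, which for such a bundle is $\int_V c_1(L_e)^n=\int_V(\pi^*(eh)-E)^n$. Expanding the $n$-th power, applying the projection formula, and using Fulton's blow-up formula for the Segre class in the form $\pi_*(E^k)=(-1)^{k-1}s_{n-k}[\Pbb^{n-k}]$ for $k\ge1$ (equivalently $\iota_*s(X,\Pbb^n)=\pi_*\big(E/(1+E)\big)$, \cite[Chapter~4]{85k:14004}), I would obtain $\int_V(\pi^*(eh)-E)^n=e^n-\sum_{j=0}^{n-1}\binom nj s_j e^j$, which is exactly $n!\,\phi(e)$. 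Feeding this back yields $\rho_0=1$ and $\rho_i=-s_{n-i}$, hence the theorem.

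The main obstacle is the identification used above — that the Newton-Okounkov bodies of the systems $|L_e|$ (for $e\gg0$) glue, as $e$ varies, into exactly the convex set $\Delta(I)$ attached to the ideal. This requires matching the coordinate valuation on $\Cbb[x_0,\dots,x_n]$ with an Okounkov valuation on $V$, verifying that the hypotheses of the Kaveh-Khovanskii theorem hold uniformly over the family, and keeping track of the identifications $H^0(V,L_e^{\otimes m})\cong(\overline{I^m})_{me}$ in large degree; one must also check that passing to the saturation of $I$ and to integral closures of its powers affects neither $s(X,\Pbb^n)$ nor the coefficients $\rho_0,\dots,\rho_n$ (the Segre class may be computed on the normalized blow-up, and only high degrees influence $\phi$ to polynomial order). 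By contrast, the remaining ingredients — the coarea reduction, the Beta integral, and the binomial expansion of $(\pi^*(eh)-E)^n$ — are routine.
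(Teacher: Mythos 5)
Your overall strategy is the same as the paper's: slice $\Delta(I)$ along the level sets of $a_0+\cdots+a_n$, reduce via the Beta integral to a statement that the slice volume is, for large level, the polynomial whose coefficients are the Segre numbers, and obtain that polynomial from the Kaveh--Khovanskii volume formula together with an intersection-theoretic evaluation of the index (your blow-up computation $\int_V(\pi^*(eh)-E)^n$ via $\iota_*s(X,\Pbb^n)=\pi_*\bigl(E/(1+E)\bigr)$ is a legitimate substitute for the paper's multidegree/``shadow'' formalism and Proposition~\ref{prop:IdS}). However, there is a genuine gap, and you have named it yourself: the identification of the slices of $\Delta(I)$ with Newton--Okounkov bodies of the degree-$e$ data is precisely the nontrivial content here, and your proposal asserts it (``$\Delta(I)$ is built so that its slice \dots is the Newton--Okounkov body of $|L_e|$'') rather than proving it. In the paper this is Proposition~\ref{prop:fibers} (feeding into Corollary~\ref{cor:vols}), and its proof is not a bookkeeping matter: one needs the identity $((I^a)_b)^u=(I^{au})_{bu}$ for $b\ge da$ to recognize the graded semigroup of the linear system $(I^a)_b$ inside $U_I$, the generic injectivity of the associated Kodaira maps (Lemma~\ref{lem:inj}) to get full-dimensionality of the relevant cones, and then the convex-geometry statement \cite[Proposition~A.1]{MR2571958} to conclude that the hyperplane section of the cone $\Sigma(U_I)$ equals the cone over the hyperplane section of $U_I$. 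Without this last step there is no reason the closed convex hull defining $\Delta(I)$ does not have slices strictly larger than the bodies $\Delta((I^a)_b)$, and your computation of $\phi(e)$ would only give an inequality. Note also that the slices must be controlled at \emph{rational} (hence, by continuity/convexity, real) levels $q=b/a$, via the powers $(I^a)_b$, since the integral requires $\phi(e)$ for all real $e\gg0$; your sketch only discusses integer levels.

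A secondary inaccuracy: the construction of $\Delta(I)$ uses the valuation images of the graded pieces $(I^t)_s$ of powers of $I$ itself, so the slice is (a scaling of) the Newton--Okounkov body of the possibly \emph{incomplete} system $(I^a)_b$ on $\Pbb^n$, not of the complete system $H^0(V,L_e)$ on the blow-up; the two can differ by saturation/integral-closure phenomena, as you note. Their intersection indices do agree for $e\gg0$ (both compute $\int_V(\pi^*(eh)-E)^n$, e.g.\ by Lemma~\ref{lem:kk} and Proposition~\ref{prop:44}), so the cleaner route---and the one the paper takes---is to work directly with $(I^a)_b$, apply \cite[Theorem~4.9(2)]{MR2950767} to it (generic injectivity coming from the factor $M^{b-da}$, Lemma~\ref{lem:ILM} and Lemma~\ref{lem:inj}), and never invoke completeness, ampleness of $L_e$, or integral closures at all. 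As it stands, your proposal correctly reduces the theorem to the key slice identification but leaves that identification, which is the heart of the proof, unestablished.
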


The integral in this statement is computed formally, treating $t$ as a positive real parameter.
According to Theorem~\ref{thm:segre1in}, the first $n+1$ coefficients $\rho_0,\dots, \rho_n$ 
of its expansion in $t$ determine and are determined by the push-forward of of $s(X,\Pbb^n)$.

In fact, {\em all\/} coefficients $\rho_i$ admit a Segre class interpretation. We prove that
the integral appearing in Theorem~\ref{thm:segre1in} computes the {\em Segre zeta
function\/} $\zeta_I(t)$ of the ideal $I$. More precisely:

\begin{theorem}\label{thm:segre2in}
With notation as above,
\[
\int_{\Delta(I)} \frac{(n+1)! t^{n+1} da_0\cdots da_n}
{(1+(a_0+\cdots + a_n)t)^{n+2}} = 1- \zeta_I(t)\quad.
\]
\end{theorem}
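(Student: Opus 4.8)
The plan is to deduce Theorem~\ref{thm:segre2in} from Theorem~\ref{thm:segre1in} by letting the number of variables grow and invoking the stabilization property that characterizes the Segre zeta function. For $N\ge n$, let $X_N\subseteq\Pbb^N$ be the subscheme defined by $I$, now viewed as a homogeneous ideal of $\Cbb[x_0,\dots,x_N]$; thus $X_N$ is the cone over $X$ with vertex the linear subspace $\{x_0=\cdots=x_n=0\}$. The defining property of $\zeta_I(t)$ is precisely that
\[
\iota_* s(X_N,\Pbb^N)=\bigl(\zeta_I(h)\bmod h^{N+1}\bigr)\cap[\Pbb^N]
\]
for \emph{every} $N\ge n$, with one and the same power series $\zeta_I(t)\in\Zbb[[t]]$. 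On the other hand, applying Theorem~\ref{thm:segre1in} to this ideal in $\Cbb[x_0,\dots,x_N]$ (that is, with $N$ in place of $n$) produces a Newton--Okounkov set $\Delta(I)\subseteq\Rbb^{N+1}$ and a power series
\[
F_N(t):=\int_{\Delta(I)}\frac{(N+1)!\,t^{N+1}\,da_0\cdots da_N}{(1+(a_0+\cdots+a_N)t)^{N+2}}=\sum_{i\ge 0}\rho_i^{(N)}t^i ,
\]
with $\iota_* s(X_N,\Pbb^N)=\bigl(1-\sum_{i=0}^N\rho_i^{(N)}h^i\bigr)\cap[\Pbb^N]$. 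Comparing the two expressions for $\iota_* s(X_N,\Pbb^N)$, and using $\zeta_I(0)=0$ (unless $I=(1)$, a degenerate case disposed of separately), gives $F_N(t)\equiv 1-\zeta_I(t)\pmod{t^{N+1}}$ for all $N\ge n$.

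It therefore suffices to show that $F_N(t)$ agrees with $F_n(t)$ up to order $t^{N+1}$ for every $N\ge n$: granting this, $F_n(t)\equiv 1-\zeta_I(t)\pmod{t^{N+1}}$ for all $N$, hence $F_n(t)=1-\zeta_I(t)$ as formal power series, which is Theorem~\ref{thm:segre2in}. In fact I expect the stronger identity $F_N(t)=F_n(t)$ to hold, coming from a clean description of how $\Delta(I)$ depends on the ambient polynomial ring: since $x_{n+1},\dots,x_N$ do not occur among generators of $I$, the set attached to $I$ in $N+1$ variables should be the cylinder $\Delta(I)\times\Rbb_{\ge 0}^{N-n}$ over the set attached to $I$ in $n+1$ variables. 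This is transparent in the monomial case, where $\Delta(I)$ is the Newton polyhedron of $I$ and the Newton polyhedron of the extended ideal is literally $\Delta(I)\times\Rbb_{\ge 0}^{N-n}$; in general one must verify it directly from the construction of $\Delta(I)$, at the level of the graded pieces of $I$ and the chosen valuation.

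Granting the cylinder description, the reduction $F_N=F_n$ is an elementary iterated integral. Writing $\sigma=a_0+\cdots+a_n$ and integrating out the extra coordinates $a_{n+1},\dots,a_N$ over $\Rbb_{\ge 0}^{N-n}$ by repeated use of $\int_0^\infty(c+bt)^{-m}\,db=\bigl((m-1)\,t\,c^{m-1}\bigr)^{-1}$, one finds, with $a_0,\dots,a_n$ held fixed,
\[
\int_{\Rbb_{\ge 0}^{N-n}}\frac{(N+1)!\,t^{N+1}\,da_{n+1}\cdots da_N}{(1+(a_0+\cdots+a_N)t)^{N+2}}=\frac{(n+1)!\,t^{n+1}}{(1+\sigma t)^{n+2}},
\]
the factorials and powers of $t$ cancelling exactly; integrating the remaining variables over $\Delta(I)\subseteq\Rbb^{n+1}$ then yields $F_N(t)=F_n(t)$.

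The real content — and the step I expect to be the main obstacle — is the geometric input of the second paragraph: extracting from the precise definition of $\Delta(I)$ the fact that adjoining to the polynomial ring variables not occurring in $I$ has exactly the effect of forming a cylinder over $\Delta(I)$, i.e. that the new coordinates range freely over $\Rbb_{\ge 0}$. Beyond this, the remaining points are purely formal: each $F_N(t)$ is already known from Theorem~\ref{thm:segre1in} to be a well-defined power series in $t$, so the manipulations with the unbounded integrals above may be carried out coefficient by coefficient in $t$; and the excluded case $I=(1)$ is immediate, since then $X=\emptyset$, $s(X,\Pbb^n)=0$, $\zeta_I=0$, and $\Delta(I)$ is the full orthant $\Rbb_{\ge 0}^{n+1}$, over which the integral evaluates to $1$.
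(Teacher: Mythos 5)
Your overall strategy is the same as the paper's: apply Theorem~\ref{thm:segre1in} in $\Pbb^N$ for every $N\ge n$ to get $F_N(t)\equiv 1-\zeta_I(t)\pmod{t^{N+1}}$, reduce everything to showing that the integral is stable under adjoining variables not occurring in $I$, obtain that stability from a cylinder description $\Delta(I')=\Delta(I)\times\Rbb^{\ge 0}$ of the body of the extended ideal, and finish by integrating out the extra coordinates (your Beta-type computation of that inner integral matches the paper's). The reduction steps and the final calculus are correct.

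However, the step you yourself flag as ``the main obstacle'' --- the cylinder property --- is exactly the substantive content of the paper's proof (its Lemma on the extension of the body), and you do not prove it, so as it stands the argument has a genuine gap. Two points make this lemma nontrivial. First, the body in $N+1$ variables is built from a valuation on $\Cbb[x_1,\dots,x_{N}]$ ($N>n$), and for an arbitrary such valuation there is no reason for $\Delta(I')$ to be literally a cylinder over $\Delta(I)$; the paper must \emph{choose} a compatible extension $v'$ of the given valuation $v$ (send $f=x_{n+1}^e g$ with $x_{n+1}\nmid g$ to $(v(g|_{x_{n+1}=0}),e)$), and then observe --- via Theorem~\ref{thm:segre1in} itself --- that the truncated integral is independent of the choice of valuation, so proving the cylinder statement for this one choice suffices. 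Your stronger claim $F_N=F_n$ for ``the'' construction in more variables is not available by this route (it is only a posteriori true because both sides equal $1-\zeta_I$). Second, even with the good valuation $v'$, the identification of valuation sets is not formal: from $(I'^t)_s=\bigoplus_j x_{n+1}^{\,j}(I^t)_{s-j}$ one only gets the inclusion $v'\bigl((I'^t)_s\bigr)\supseteq\bigcup_j \bigl(v((I^t)_{s-j}),j\bigr)$ for free; equality requires a counting argument using $|v(W)|=\dim_\Cbb W$ for finite-dimensional subspaces (Lazarsfeld--Musta\c{t}\u{a}), after which one still has to pass from the graded sets $U_{I'}$ to their closed convex cones and check that the cone of a product is the product of the cones. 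Without supplying this argument (or an equivalent one), the proof is incomplete; with it, your proposal becomes essentially the paper's proof.
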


The Segre zeta function $\zeta_I(t)$ is a power series evaluating the push-fowards of the
Segre classes $s(X^{(N)},\Pbb^N)$ of the cones $X^{(N)}$ over $X$ in $\Pbb^N$ for $N\ge n$.
In~\cite{MR3709134} it is shown that $\zeta_I(t)$ is a {\em rational\/} function; so must be
the integral appearing in Theorems~\ref{thm:segre1in} and~\ref{thm:segre2in}. In fact,
\[
\zeta_I(t) = \frac{A(t)+d_0\cdots d_r t^{r+1}}{(1+d_0 t)\cdots (1+d_r t)}
\]
where the integers $d_i$ are a subset of the degree sequence for $I$ and $A(t)$ is a 
polynomial of degree $\le r$ with nonnegative integer coefficients. It would be interesting to 
provide an interpretation or a different proof for these facts in terms of the Newton-Okounkov
body~$\Delta(I)$.\smallskip

The paper is organized as follows. In~\S\ref{s:prelim} we discuss preliminaries, including
short summaries of the definitions of Segre classes and of ordinary Newton-Okounkov
bodies, and of the Kaveh-Khovanskii intersection index of a linear system. We explain how the 
intersection index may be computed from the Segre class of an associated scheme
(Corollary~\ref{cor:intindsc}). We expand on this relation in the case of subschemes of 
projective space, and prove (Proposition~\ref{prop:IdS}) that knowledge of the intersection 
indices determined by linear systems associated with the graded pieces $I_s$ of a 
homogeneous ideal $I$, for $s\gg 0$, is in fact equivalent to knowledge of the push-forward
$\iota_* s(X,\Pbb^n)$ of the Segre class of the subscheme $X$ of $\Pbb^n$ defined by~$I$.
In~\S\ref{s:constr} we construct the Newton-Okounkov body $\Delta(I)$. The construction
is an adaptation of the `global Okounkov body' of~\cite[\S4]{MR2571958}, and depends on
the choice of a valuation on $\Cbb[x_1,\dots, x_n]$ (as well as on the choice of a
dehomogenizing parameter). The body $\Delta(I)$ is a closed convex 
cone in $\Rbb^{n+1}$; it maps to~$\Rbb^1$ by the function $(a_0,\dots, a_n)\mapsto \sum_i a_i$, 
and we prove (Proposition~\ref{prop:fibers}) that, for integer $s\gg 0$, the fiber over $s$ of this 
map equals the (conventional) Newton-Okounkov body of the graded piece $I_s$. This fact 
allows us to relate the volume of the fiber over $r\in \Rbb$, $r\gg 0$, to the degrees of the
components of $s(X,\Pbb^n)$. In~\S\ref{s:segre1} this relation is used to prove 
Theorem~\ref{thm:segre1in}. The argument is refined in~\S\ref{s:segre2} to yield the 
proof of the more precise Theorem~\ref{thm:segre2in}.\smallskip

If $I\subseteq \Cbb[x_0,\dots, x_n]$ is a {\em monomial\/} ideal in the indeterminates $x_i$, the
construction presented in~\S\ref{s:constr} simply reproduces the Newton polytope of $I$,
and Theorem~\ref{thm:segre1in} reproduces the corresponding consequence of the
result from~\cite{MR3576538}. We note that the result from~\cite{MR3576538} is
stronger, in
the sense that it is a statement about classes in the Chow group, rather than about their
degrees; further, it holds for monomial schemes based on a set of divisors meeting with
`regular crossings' (a substantial weakening of the normal crossing condition) on any
(not necessarily smooth) variety. We expect that a corresponding strengthening of 
Theorems~\ref{thm:segre1in} and~\ref{thm:segre2in} should hold, providing a computation 
of the Segre class
$s(X,V)$ as an integral on a Newton-Okounkov body generalizing the construction
given here, interpreted as a class in $A_*X$ in the same way as is done in the monomial 
case in~\cite{MR3576538}. A proper generalization should put little or no requirements on 
the ambient variety $V$. The proof of such a result would likely have to rely on different
techniques---we do not expect the `volume' considerations that lead to the proof of Theorems~\ref{thm:segre1in} and~\ref{thm:segre2in} in this paper to be adequate to deal with 
classes in the Chow group. A proof of the stronger result would likely ultimately rely on
the birational invariance of Segre classes and on the study of the effect of blow-ups on
a suitable generalization of the Newton-Okounkov body $\Delta(I)$ constructed here.

\subsection{Acknowledgments}
This work was carried out while the author was visiting the University of Toronto.
The author thanks the University of Toronto for the hospitality.


\newcommand{\T}{{T}}

\section{Preliminaries}\label{s:prelim}

\subsection{}
We work over $\Cbb$. This is necessary as the results from~\cite{MR2950767} we will use
in the proof are stated over $\Cbb$; we expect that the results of this paper should hold over
arbitrary algebraically closed fields.   

\subsection{}
Let $X\subsetneq V$ be a closed embedding of schemes; for convenience we assume $V$ to
be a variety. The {\em Segre class\/} $s(X,V)$
is an element of the Chow group $A_*X$ of $X$, characterized by the following properties:
\begin{itemize}
\item {\em Birational invariance:\/} If $f: V' \to V$ is a proper birational map, then 
\[
s(X,V) = f_* s(f^{-1}(X),V')\quad.
\]
\item {\em Divisors:\/} If $X$ is a Cartier divisor in $V$, then $s(X,V)=c(\cO(X))^{-1}\cap [X]$.
\end{itemize}
It is clear that these properties determine $s(X,V)$ for any closed subscheme $X\subsetneq V$:
by the first point we may blow-up $V$ along $X$, reducing to the case in which $f^{-1}(X)$ is
a divisor in $V$, and the second point determines the class in this case.

It can be shown that the second point generalizes to arbitrary regular embeddings: 
if $X\subsetneq V$ is a regular embedding, with normal bundle $\cN$, then 
$s(X,V)=c(\cN)^{-1} \cap [X]$. For a thorough treatment of Segre classes, the reader
is addressed to~\cite[Chapter 4]{85k:14004}.

The Fulton-MacPherson intersection product may be defined in terms of Segre classes.
If $X$ and $Y$ are subvarieties of a variety $V$, with $X$ regularly embedded in $V$
with normal bundle $\cN$, consider the fiber diagram
\[
\xymatrix{
X\cap Y \ar[r] \ar[d]_j & Y \ar[d] \\
X \ar[r] & V
}
\]
Then we may set
\[
X\cdot Y := \{c(j^*\cN)\cap s(X\cap Y,Y)\}_{\dim X+\dim Y-\dim V}
\]
where $\{\cdot \}_k$ stands for the component of dimension $k$ in the class within braces;
cf.~\cite[Proposition~6.1(a)]{85k:14004}.
This definition satisfies all expected properties of an intersection product, and may be used 
to give $A_*V$ the structure of a ring when $V$ is nonsingular (\cite{85k:14004}).

Thus, Segre classes may be viewed as a key tool in intersection theory, and this is our
motivation in seeking alternative ways to compute them. We also reproduce here the 
following result, which will be relevant to our discussion of the `intersection index' 
in~\S\ref{s:KKintind}. Let $L$ be a finite dimensional vector space of sections of a line
bundle $\cL$ on a compact $n$-dimensional variety $\overline V$. 
We have the associated `Kodaira' rational map $\varphi: \overline V\dashrightarrow \Pbb(L^\vee)$,  
and we let $\Til V\to \overline V$ be any resolution of the indeterminacies of $\varphi$, i.e., 
a proper birational morphism such that the composition with $\varphi$ determines a
{\em regular\/} map
\[
\xymatrix{
\tilde \varphi: \Til V \ar[r] & \Pbb(L^\vee)\quad.
}
\]
For example, $\Til V\to \overline V$ could be the blow-up of $\overline V$ along the base scheme 
$B$ of $L$ (i.e., the intersection of all divisors of $V$ corresponding to sections in $L$).

\begin{prop}\label{prop:44}
Let $H$ be the hyperplane class in $\Pbb(L^\vee)$. Then with notation as above
\[
\int (\tilde\varphi^* H)^n \cdot [\Til V] = \int c_1(\cL)^n\cap [\overline V] 
- \int c(\cL)^n\cap s(B,\overline V)\quad.
\]
\end{prop}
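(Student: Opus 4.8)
The plan is to compute both sides on the resolution $\Til V$ and compare. Let $\pi\colon \Til V\to \overline V$ denote the structure morphism, write $\cN$ for the (pullback of the) line bundle $\cL$ to $\Til V$, and let $E$ be the exceptional Cartier divisor so that $\pi^{-1}(B)$ is cut out by $E$; when $\Til V$ is the blow-up of $\overline V$ along $B$, $E$ is the exceptional divisor. The key input is the standard description of the Kodaira map after resolution: the linear system $\pi^*L$ on $\Til V$ decomposes as a fixed part plus a base-point-free part, and $\tilde\varphi^*H = \pi^*c_1(\cL) - E$ as divisor classes on $\Til V$. (This is exactly the statement that the moving part of $|\pi^*L|$ defines $\tilde\varphi$ and has class $\pi^*c_1(\cL)-E$.) First I would establish this identity, citing the universal property of blow-ups and the base-point-freeness of the moving linear system on the resolution.

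Next, substitute into the left-hand side and expand:
\[
\int (\tilde\varphi^* H)^n \cdot [\Til V] = \int (\pi^*c_1(\cL) - E)^n \cap [\Til V]
= \sum_{k=0}^n \binom{n}{k} (-1)^k \int \pi^*c_1(\cL)^{n-k}\cap E^k\cap [\Til V]\quad.
\]
The $k=0$ term equals $\int c_1(\cL)^n\cap[\overline V]$ by the projection formula, since $\pi$ is proper birational. It therefore remains to identify the sum of the $k\ge 1$ terms with $\int c(\cL)^n\cap s(B,\overline V)$. Here I would use the birational invariance of Segre classes from the preliminaries: $s(B,\overline V) = \pi_* s(\pi^{-1}(B),\Til V)$, and since $\pi^{-1}(B)$ is the Cartier divisor $E$, the divisor formula gives $s(\pi^{-1}(B),\Til V) = c(\cO(E))^{-1}\cap [E] = \sum_{j\ge 0}(-1)^j E^j\cap [E] = \sum_{j\ge 1}(-1)^{j-1} E^j\cap[\Til V]$. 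Capping with $\pi^*c(\cL)^n$ and pushing forward, the projection formula yields
\[
\int c(\cL)^n\cap s(B,\overline V) = \int \pi^*c(\cL)^n\cap \sum_{j\ge 1}(-1)^{j-1} E^j\cap[\Til V]
= \sum_{j\ge 1}(-1)^{j-1}\sum_{i\ge 0}\binom{n}{i}\int \pi^*c_1(\cL)^i\cap E^j\cap[\Til V]\quad.
\]
Only terms of total dimension zero survive, i.e. $i+j=n$, so this collapses to $\sum_{j=1}^n(-1)^{j-1}\binom{n}{n-j}\int \pi^*c_1(\cL)^{n-j}\cap E^j\cap[\Til V]$, which matches $-\sum_{k=1}^n\binom{n}{k}(-1)^k\int\pi^*c_1(\cL)^{n-k}\cap E^k\cap[\Til V]$ term by term after setting $k=j$ and using $\binom{n}{n-j}=\binom{n}{j}$. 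Combining the two computations gives the claimed identity.

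The main obstacle is the first step: making precise the identity $\tilde\varphi^*H = \pi^*c_1(\cL) - E$ on the resolution, and in particular checking that it holds for an \emph{arbitrary} resolution of indeterminacies and not just for the blow-up along $B$. For a general $\Til V$ dominating the blow-up this follows by a further pushforward argument, since $(\tilde\varphi^*H)^n$ and the relevant intersection numbers are insensitive to the choice of resolution; alternatively one reduces to the blow-up case at the outset, which suffices for the applications. A secondary point requiring care is that $B$ may have components of various dimensions, so $s(B,\overline V)$ is in general a non-pure-dimensional class; the argument above handles this automatically because only the zero-dimensional part contributes to $\int$, but I would remark on it explicitly.
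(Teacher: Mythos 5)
Your proposal is correct, but it takes a more self-contained route than the paper. The paper's proof is two lines: it observes that every resolution of the indeterminacies factors through the blow-up $B\ell_B\overline V$, reduces to the case where $\Til V$ is that blow-up, and then cites \cite[Proposition~4.4]{85k:14004} verbatim. You perform the same reduction (your remark that the relevant intersection numbers are insensitive to the choice of resolution is exactly this factorization combined with the projection formula), but instead of invoking Fulton you reprove his Proposition~4.4 from scratch: the identity $\tilde\varphi^*H=\pi^*c_1(\cL)-E$ on the blow-up, the binomial expansion of $(\pi^*c_1(\cL)-E)^n$, and the identification of the $k\ge 1$ terms with $-\int c(\cL)^n\cap s(B,\overline V)$ via birational invariance of Segre classes and the divisor formula $s(E,\Til V)=c(\cO(E))^{-1}\cap[E]$; the bookkeeping with $\binom{n}{n-j}=\binom{n}{j}$ and the degree-zero terms is right. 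This is in substance Fulton's own argument, so nothing is gained in generality, but your version makes explicit that the only inputs are the two characterizing properties of Segre classes recorded in the paper's preliminaries, whereas the paper's version is shorter at the cost of an external citation. Two minor points: the identity $\tilde\varphi^*H=\pi^*c_1(\cL)-E$ should be asserted only on the blow-up, where $\cI_B\cdot\cO_{\Til V}=\cO(-E)$ and the moving part of $\pi^*L$ generates $\pi^*\cL\otimes\cO(-E)$, with the general resolution handled by the factorization as you indicate (that factorization itself deserves a word, since it is what justifies replacing an arbitrary $\Til V$ by the blow-up); and the auxiliary notation $\cN$ you introduce is never used.
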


\begin{proof}
Every resolution $\Til V$ factors through the blow-up $B\ell_B\overline V$. Therefore we
may in fact assume that $\Til V$ is the blow-up, and then the statement is
\cite[Proposition 4.4]{85k:14004}.
\end{proof}

\subsection{}\label{ss:NOdef}
Let $V$ be an algebraic variety of dimension $n$ (not necessarily nonsingular or complete).
We can associate vector spaces of rational functions on $V$ with linear systems on any 
completion $\overline V$ of $V$: if $\cL$ is a line bundle and $L\subseteq H^0(\overline V, \cL)$
is a subspace, fix a nonzero section $s_0$ of $\cL$ and associate $s\in L$ with the
rational function $\frac s{s_0}$. The choice can be performed compatibly with products
(for example, we may choose $s_0^t$ to identify $L^t\subseteq H^0(\overline V, \cL^{\otimes t})$
with a space of rational functions).
This will be implicitly assumed in the following; 
we will abuse notation and use the same notation for a linear system $L$ and for a 
corresponding space of rational functions.

We fix a $\Zbb^n$-valued valuation $v$ on the field of rational 
functions on $V$. For example, $v$~could be the valuation associated with an `admissible 
flag' as in~\cite[\S1]{MR2571958}; for a general discussion of valuations in the context
needed here, see~\cite[\S2.2]{MR2950767}.

For $t\in \Zbb^{\ge 0}$, the spaces $L^t$ (spanned by products of $t$-tuples of elements 
of $L$) determine subsets $v(L^t):=v(L^t\smallsetminus 0)$ 
of $\Zbb^n$. By construction, $v(L^t)\subseteq v(L^u)$ for $t\le u$. 
The Newton-Okounkov body $\Delta(L)$ captures the asymptotic behavior of $v(L^t)$ as 
$t\to \infty$. To construct~$\Delta(L)$, consider the set
\[
U = \{(\ua, u)\in \Rbb^n\times \Rbb^1\,|\, u\in \Zbb^{\ge 0}, \ua\in v(L^u)\}\quad;
\]
$U$ is the {\em graded semigroup\/} in the terminology of~\cite{MR2571958}. Let 
$\Sigma(U)$ be the closed convex cone spanned by $U$. The Newton-Okounkov
body of $L$ is obtained by setting the last coordinate $u$ to $1$ in $\Sigma(U)$.

\begin{defin}[\cite{MR2571958}, Definition~1.8; \cite{MR2950767}, \S1]
The {\em Newton-Okounkov body\/} of $L$ is the convex set
\[
\Delta(L) := \Sigma(U)\cap (\Rbb^n\times \{1\})\quad,
\]
viewed as a subset of $\Rbb^n$.
\qede\end{defin}

The Newton-Okounkov body of a space $L$ is a closed, convex, and compact
subset of $\Rbb^n$. Its definition depends on the valuation $v$ and, for linear systems,
on the chosen identification with spaces of rational functions. These choices will be
inessential in what follows, so they are omitted from the notation.

\subsection{}\label{s:KKintind}
Kaveh and Khovanskii associate an 
{\em intersection index,\/} denoted $[L,\dots, L]$ in~\cite{MR2950767} 
with every space $L$ of rational functions as above, and more generally with any choice of
$n$ spaces $L_1,\dots, L_n$. The index $[L,\dots, L]$ equals the number of 
solutions in $V$ of a system of equations 
$\ell_1 = \cdots = \ell_n = 0$, where each $\ell_i$ is a general element in $L$, and one neglects
intersections at which all $\ell\in L$ vanish, and those occurring where some of the 
functions $\ell_i$
have poles. By construction, the intersection index of $L$ does not change if we replace 
$V$ by a dense open subset, so we may in fact assume that all functions in~$L$ are regular.
If the space arises from a linear system $L$ as in~\S\ref{ss:NOdef}, the index is clearly
independent of the chosen identification; we will therefore use the notation $[L,\dots, L]$
in this case as well.

We will first recall an intersection-theoretic interpretation of the Kaveh-Khovanskii intersection
index; this is a minimal adaptation of the treatment in~\cite[\S7]{MR2722802}.
Any choice $f_0,\dots, f_r$ of generators of $L$ determines a rational map
\[
\xymatrix{
\varphi: V \ar@{-->}[r] &  \Pbb^r\quad,
}
\] 
mapping $p\in V$ to $(f_0(p)\colon \dots \colon f_r(p))$ provided $f_i(p)\ne 0$ for some $p$.
Note that there is a canonical injection of $\Pbb(L^\vee)$ into $\Pbb^r$, and $\varphi$
factors
\[
\xymatrix{
\varphi:\quad V \ar@{-->}[r] & \Pbb(L^\vee) \ar@{^(->}[r] & \Pbb^r
}
\]
where $V \dashrightarrow \Pbb(L^\vee)$ is the standard Kodaira rational map.
Extend $\varphi$ to any completion $\overline V$ of $V$, and let $\pi: \Til V\to \overline V$ 
be any birational map resolving the indeterminacies of $\varphi$:
\[
\xymatrix{
& \Til V \ar[ld]_\pi \ar[rd]^{\tilde \varphi} \\
\overline V \ar@{-->}[rr]^\varphi & & \Pbb^r
}
\]
For example, $\Til V$ could be the closure of the graph of $\varphi$, or equivalently
the blow-up of $\overline V$ along the base scheme of $L$.
Finally, let $H$ be the hyperplane class in $\Pbb^r$. The following is essentially
a particular case of~\cite[Corollary~7.7]{MR2722802}.

\begin{lemma}\label{lem:kk}
With notation as above, 
\[
[L,\dots, L] = \int (\tilde\varphi^* H)^n \cdot [\Til V]\quad.
\]
\end{lemma}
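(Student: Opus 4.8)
The plan is to show that both sides of the claimed identity evaluate to $\deg(\varphi)\cdot\deg(\overline{\varphi(V)})$ when the image $\overline{\varphi(V)}\subseteq\Pbb^r$ has dimension $n$, and vanish otherwise. Here $\deg(\varphi)$ denotes the degree of the dominant rational map $V\dashrightarrow\overline{\varphi(V)}$ when it is generically finite (and is set to $0$ otherwise), $\overline{\varphi(V)}$ is the closure of the image (which also equals $\tilde\varphi(\Til V)$), and $\deg$ of a subvariety of $\Pbb^r$ is taken with respect to $H$. First I would dispose of the right-hand side by the projection formula: since $\pi$ is birational, $\tilde\varphi$ and $\varphi$ have the same image and the same generic degree, so $\tilde\varphi_*[\Til V]$ equals $\deg(\varphi)\,[\overline{\varphi(V)}]$ when $\dim\overline{\varphi(V)}=n$ and is $0$ otherwise (for dimension reasons); hence $\int(\tilde\varphi^*H)^n\cdot[\Til V]=\int H^n\cap\tilde\varphi_*[\Til V]$ equals $\deg(\varphi)\deg(\overline{\varphi(V)})$ or $0$ accordingly.

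Then I would unwind the left-hand side. By definition $[L,\dots,L]$ counts the points of $V$ at which $n$ general elements $\ell_1,\dots,\ell_n$ of $L$ vanish simultaneously, discarding points on the base scheme $B$ of $L$ and points at which some $\ell_i$ has a pole; as recalled in~\S\ref{s:KKintind} one may assume all functions in $L$ are regular, so that only $B$, a closed subset of dimension $\le n-1$, is discarded, and one may shrink $V$ freely. Writing $\ell=\sum c_if_i$ identifies the projectivization of $L$ with the space of hyperplanes of $\Pbb(L^\vee)\subseteq\Pbb^r$, so a general $\ell_i\in L$ corresponds to a general hyperplane $H_i\subset\Pbb^r$, and on the open set $U\subseteq V$ where $\varphi$ is a morphism one has $\{\ell_1=\cdots=\ell_n=0\}\cap U=\varphi^{-1}(H_1\cap\cdots\cap H_n)\cap U$. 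If $\dim\overline{\varphi(V)}<n$, a general $n$-fold hyperplane section misses $\overline{\varphi(V)}$ and $[L,\dots,L]=0$. If $\dim\overline{\varphi(V)}=n$, then over $\Cbb$ there is a closed subset $Z\subsetneq\overline{\varphi(V)}$ outside of which every fibre of $\varphi$ meets $U\smallsetminus B$ in exactly $\deg(\varphi)$ reduced points, and a general intersection $H_1\cap\cdots\cap H_n$ meets $\overline{\varphi(V)}$ transversally, in its smooth locus and away from $Z$, in $\deg(\overline{\varphi(V)})$ reduced points; summing the $\deg(\varphi)$ preimages of each of them yields $[L,\dots,L]=\deg(\varphi)\deg(\overline{\varphi(V)})$. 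Comparing with the first paragraph proves the lemma.

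The step I expect to be the main obstacle is the genericity bookkeeping in the previous paragraph: one must guarantee that a single general choice of $H_1,\dots,H_n$ simultaneously meets $\overline{\varphi(V)}$ transversally in its smooth locus and avoids the closed subset $Z\subsetneq\overline{\varphi(V)}$, so that it meets $\overline{\varphi(V)}$ only at points each carrying exactly $\deg(\varphi)$ reduced preimages in $V\smallsetminus B$. The existence of such a $Z$ and the fact that a general $n$-fold hyperplane section misses it are standard consequences of Bertini's theorem, Kleiman's transversality theorem, and generic smoothness over $\Cbb$ (which is where the hypothesis on the ground field is used); the one genuinely global input is the projection-formula computation of the right-hand side. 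Alternatively---and this is the route suggested by the placement of Proposition~\ref{prop:44}---after identifying $L$ with a space of sections of a line bundle $\cL$ on $\overline V$ one may instead combine Proposition~\ref{prop:44}, which rewrites $\int(\tilde\varphi^*H)^n\cdot[\Til V]$ as $\int c_1(\cL)^n\cap[\overline V]-\int c(\cL)^n\cap s(B,\overline V)$, with the intersection-theoretic formula $[L,\dots,L]=\int c_1(\cL)^n\cap[\overline V]-\int c(\cL)^n\cap s(B,\overline V)$ established in~\cite[\S7]{MR2722802}, of which the present lemma is noted to be essentially a special case.
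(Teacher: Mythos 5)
Your argument is correct, and it is organized a bit differently from the paper's. The paper's proof is a one-step transfer: since $V$ and $\Til V$ share a dense open subset, the birationally invariant index $[L,\dots,L]$ may be computed on $\Til V$, where $L$ becomes a base-point-free system whose members are the preimages $\tilde\varphi^{-1}(\text{hyperplane})$; the index then coincides with the ordinary self-intersection number $\int(\tilde\varphi^*H)^n\cdot[\Til V]$ (this is essentially the citation of \cite[Corollary~7.7]{MR2722802}). You instead evaluate the two sides independently and show each equals $\deg(\varphi)\cdot\deg\big(\overline{\varphi(V)}\big)$ (and vanishes when the image has dimension $<n$): the cycle side by the projection formula applied to the proper map $\tilde\varphi$, using $\tilde\varphi_*[\Til V]=\deg(\varphi)[\overline{\varphi(V)}]$, and the index side by identifying general members of $L$ with general hyperplanes and doing the Bertini/Kleiman/generic-smoothness bookkeeping on $V$ itself, after discarding the base locus (whose image is a proper closed subset of the $n$-dimensional image, for dimension reasons). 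What your route buys is self-containedness and an explicit record of where the hypothesis $k=\Cbb$ (or characteristic zero) enters, at the cost of the genericity bookkeeping you flag; what the paper's route buys is brevity, since the hard content is delegated to the known base-point-free case. Your closing ``alternative'' (combine Proposition~\ref{prop:44} with the Segre-class formula for the index from \cite{MR2722802}) should be treated only as a cross-check: in this paper that formula, Corollary~\ref{cor:intindsc}, is \emph{deduced} from the present lemma together with Proposition~\ref{prop:44}, so using it as the input would be circular unless you verify it is proved independently in the cited source.
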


\begin{proof}
Since $V$ and $\Til V$ share a dense open subset, the index $[L,\dots, L]$ may be 
computed on~$\Til V$. There $L$ corresponds to a base point free linear system, whose
elements are preimages of hyperplanes from $\Pbb^r$. The index then agrees with
the ordinary intersection product, and this is the statement.
\end{proof}

In fact, identify $L$ with a linear system of sections of a line bundle $\cL$ on $\overline V$, 
and let $B$ be its base locus, i.e., the intersection of all divisors defined by nonzero sections 
in the system.

\begin{corol}\label{cor:intindsc}
With notation as above,
\begin{equation}\label{eq:intindsc}
[L,\dots, L] = \int c_1(\cL)^n\cap [\overline V] - \int c(\cL)^n\cap s(B,\overline V)
\end{equation}
\end{corol}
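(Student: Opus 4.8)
The plan is to derive Corollary~\ref{cor:intindsc} by combining Lemma~\ref{lem:kk} with Proposition~\ref{prop:44}, since these two statements together contain essentially all the geometric content. Both results involve a resolution $\Til V \to \overline V$ of the indeterminacies of the Kodaira map associated with $L$; the only mismatch is that Lemma~\ref{lem:kk} is phrased in terms of the map $\tilde\varphi$ to the full projective space $\Pbb^r$ determined by a choice of generators $f_0,\dots,f_r$, while Proposition~\ref{prop:44} is phrased in terms of the class $c_1(\cL)$ on $\overline V$. So the first step is to reconcile the two pullback classes.

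First I would observe that, under the factorization $\varphi: V \dashrightarrow \Pbb(L^\vee)\hookrightarrow \Pbb^r$ noted just before Lemma~\ref{lem:kk}, the hyperplane class $H$ on $\Pbb^r$ restricts to the hyperplane class on $\Pbb(L^\vee)$, so after passing to the resolution $\Til V$ (which we may take to be the blow-up $B\ell_B\overline V$, as both statements allow) the class $\tilde\varphi^* H$ agrees with $\pi^* c_1(\cL)$ twisted by the exceptional contribution—more precisely, on $\Til V$ one has $\tilde\varphi^* H = \pi^* c_1(\cL) - E$ for the appropriate effective exceptional class $E$ supported on $\pi^{-1}(B)$, reflecting that sections of $\cL$ in $L$ all vanish on $B$. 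However, I don't actually need to make this divisor-level comparison: it is cleaner to note that Proposition~\ref{prop:44} is stated precisely so that its left-hand side $\int (\tilde\varphi^* H)^n \cdot [\Til V]$ is literally the same quantity appearing on the right-hand side of Lemma~\ref{lem:kk}. Indeed the resolution $\tilde\varphi: \Til V \to \Pbb(L^\vee)$ in Proposition~\ref{prop:44} and the resolution $\tilde\varphi: \Til V \to \Pbb^r$ in Lemma~\ref{lem:kk} have the same top self-intersection number of the pulled-back hyperplane class, because $\Pbb(L^\vee)\hookrightarrow \Pbb^r$ is a linear embedding and hyperplane restricts to hyperplane.

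The proof is therefore just a two-line chain of equalities: by Lemma~\ref{lem:kk}, $[L,\dots,L] = \int (\tilde\varphi^* H)^n \cdot [\Til V]$; by Proposition~\ref{prop:44}, this equals $\int c_1(\cL)^n\cap [\overline V] - \int c(\cL)^n\cap s(B,\overline V)$; and this is exactly \eqref{eq:intindsc}. I would also remark that the base locus $B$ is the same scheme in both invocations—the intersection of all divisors of nonzero sections in $L$—so there is no compatibility issue there either, and that the independence of the right-hand side from the choice of completion $\overline V$ and of generators follows from the birational invariance of Segre classes together with the left-hand side being manifestly intrinsic to $L$.

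The main (minor) obstacle is purely bookkeeping: making sure the identification of the two appearances of "$\tilde\varphi^*H$" is airtight, i.e., that passing from $\Pbb^r$ to the honest Kodaira target $\Pbb(L^\vee)$ does not change the self-intersection number. Since the embedding is linear this is immediate, but it is worth stating explicitly; no genuine difficulty arises.
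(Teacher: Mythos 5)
Your proof is correct and is essentially the paper's own argument: the corollary is obtained by chaining Lemma~\ref{lem:kk} with Proposition~\ref{prop:44}, and your extra remark that the linear embedding $\Pbb(L^\vee)\hookrightarrow\Pbb^r$ makes the two pulled-back hyperplane classes give the same top self-intersection number is a harmless bookkeeping point the paper leaves implicit.
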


\begin{proof}
This follows from Lemma~\ref{lem:kk} and Proposition~\ref{prop:44}.
\end{proof}

\begin{remark}
We could adopt~\eqref{eq:intindsc} as the {\em definition\/} of the intersection index
$[L,\dots, L]$, and use it to give a treatment of the index over more general fields.
In the following, we will exploit a relation between the intersection index and Segre classes
of which~\eqref{eq:intindsc} is the most straightforward manifestation; see
especially Proposition~\ref{prop:IdS} and Corollary~\ref{cor:frac}.
\qede\end{remark}

\subsection{}
Composing the morphism $\varphi: V \dashrightarrow \Pbb^r$ with an $a$-Veronese
embedding, we see that
\begin{equation}\label{eq:multiadd}
[L^a,\dots, L^a] = \int (\tilde\varphi^* (aH))^n \cdot [\Til V]
=a^b \int (\tilde\varphi^* H)^n \cdot [\Til V] = a^n [L,\dots, L]
\end{equation}
by Lemma~\ref{lem:kk}. And indeed, the Kaveh-Khovanskii intersection index is `multiadditive' (\cite[Theorem~4.7(1)]{MR2950767}). 

If $L,M$ are two nonzero finite dimensional subspaces of rational functions on a 
variety~$V$, choices of generators $f_0,\dots, f_r$ for $L$ and $g_0,\dots g_s$ for $M$
determine a set of generators $f_i g_j$ for $LM$. The corresponding rational map
$\psi: V \dashrightarrow \Pbb^{rs+r+s}$ factors through the Segre embedding:
\[
\xymatrix{
\psi\colon\quad V \ar@{-->}[r]^-\varphi &  \Pbb^r\times \Pbb^s \ar@{^(->}[r] & \Pbb^{rs+r+s}\quad.
}
\] 
The hyperplane class in $\Pbb^{rs+r+s}$ pulls back to the sum $h+k$ of the (pull-backs
of the) hyperplane classes in the two factors. By Lemma~\ref{lem:kk}, 
\[
[LM,\dots, LM] =  \int (\tilde\varphi^* (h+k))^n \cdot [\Til V]
=\sum_{i=0}^n \binom ni  \int (\tilde\varphi^* h^i k^{n-i}) \cdot [\Til V]
\quad,
\]
where again $\tilde\varphi : \Til V \to V$ resolves the indeterminacies of $\varphi$.
It follows easily that
\[
[\underbrace{L,\dots, L}_i, \underbrace{M,\dots, M}_{n-i}] =
\int (\tilde\varphi^* h^i k^{n-i}) \cdot [\Til V]\quad.
\]
The same technique may be used to express $[L_1,\dots, L_n]$ as an ordinary intersection 
product for any choice of $n$ finite dimensional vector spaces of rational functions, 
generalizing Lemma~\ref{lem:kk}. (See e.g., \cite[Corollary~7.7]{MR2722802}.)
Here we note the following observation, for later use. 

\begin{lemma}\label{lem:inj}
With notation as above, assume that the Kodaira rational map associated with~$M$ is
generically injective. Then the Kodaira rational map associated with $LM$ is generically
injective. 
\end{lemma}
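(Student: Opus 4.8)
The plan is to use the factorization of the Kodaira map of $LM$ through the Segre embedding recorded just above the statement, together with the trivial fact that a rational map to a product of varieties is generically injective as soon as one of its two components is.

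In detail, fix generators $f_0,\dots,f_r$ of $L$ and $g_0,\dots,g_s$ of $M$, so that the products $f_ig_j$ generate $LM$ and the associated rational map $\psi\colon V\dashrightarrow\Pbb^{rs+r+s}$ factors as
\[
\psi\colon\quad V \dashrightarrow \Pbb^r\times\Pbb^s \hookrightarrow \Pbb^{rs+r+s}\quad,
\]
where the second arrow is the Segre embedding and the first is the pair $(\varphi_L,\varphi_M)$ of the Kodaira maps of $L$ and $M$ (composed with the linear inclusions $\Pbb(L^\vee)\hookrightarrow\Pbb^r$ and $\Pbb(M^\vee)\hookrightarrow\Pbb^s$, which are irrelevant to the question at hand). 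The intrinsic Kodaira map $\varphi_{LM}\colon V\dashrightarrow\Pbb((LM)^\vee)$ of $LM$ agrees with $\psi$ after the linear embedding $\Pbb((LM)^\vee)\hookrightarrow\Pbb^{rs+r+s}$ determined by the chosen generators; since linear embeddings and the Segre embedding are injective on closed points, $\varphi_{LM}$ is generically injective if and only if $(\varphi_L,\varphi_M)$ is. So it suffices to show that $(\varphi_L,\varphi_M)$ is generically injective.

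For this I would pick a dense open $U\subseteq V$ on which $\varphi_L$ and $\varphi_M$ are both morphisms and on which $\varphi_M|_U$ is injective; such $U$ exists precisely because $\varphi_M$ is generically injective (we are over $\Cbb$). Then $(\varphi_L,\varphi_M)|_U$ is a morphism, and composing it with the second projection $\Pbb^r\times\Pbb^s\to\Pbb^s$ recovers $\varphi_M|_U$. Hence if $(\varphi_L,\varphi_M)$ agreed at two points of $U$, then $\varphi_M$ would agree there too, forcing the two points to coincide; so $(\varphi_L,\varphi_M)|_U$ is injective and $(\varphi_L,\varphi_M)$ is generically injective. This proves the lemma.

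No serious difficulty is expected here, the content being elementary. The one place needing a little attention is the first step: one must check that shuttling between $\varphi_{LM}$, $\psi$, and $(\varphi_L,\varphi_M)$ via the linear and Segre embeddings neither creates nor destroys generic injectivity, which comes down to those maps being honest embeddings. Once this is granted, the statement reduces to the observation about components of a product map.
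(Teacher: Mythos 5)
Your argument is correct and is essentially the paper's own proof: both use that generic injectivity of the map to $\Pbb^s$ given by $M$ forces generic injectivity of the map to $\Pbb^r\times\Pbb^s$, hence of $\psi$ after the Segre embedding, and then of the Kodaira map of $LM$ since $\psi$ factors through it via a linear embedding. Your write-up merely spells out these reductions in more detail than the paper does.
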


\begin{proof}
Still using notation as above, the morphism $V \dashrightarrow \Pbb^s$ is generically injective 
by hypothesis, therefore so is $\varphi: V \dashrightarrow \Pbb^r\times \Pbb^s$. It follows
that $\psi: V \dashrightarrow \Pbb^{rs+r+s}$ is generically injective, and then so must be
the corresponding Kodaira rational map, since $\psi$ factors through it.
\end{proof}

\subsection{}
Now let $I\subseteq \Cbb[x_0,\dots, x_n]$ be a homogeneous ideal: 
$I=\oplus_{s\ge 0}I_s$, where $I_s$ is the piece in degree $s$. 
The {\em degree sequence\/} of $I$ is the list $(d_0,\dots,d_r)$ of degrees of a minimal
set of homogeneous generators for $I$; this depends only on $I$. In particular, so does the
largest element $d:=d_r$; we will call this the {\em generating degree\/} for $I$. Note that
$I_d\cdot \Cbb[x_0,\dots, x_n]=\oplus_{s\ge d} I_s$. We record the following elementary fact.

\begin{lemma}\label{lem:ILM}
Let $L=I_d$, where $d$ is the generating degree of $I$, and let $M=\langle x_0,\dots, x_n\rangle$.
Then for $a,b\in \Zbb$ such that $a\ge 1$ and $b\ge d a$ we have
\begin{equation}\label{eq:ILM}
(I^a)_b = L^a M^{b-d a}\quad.
\end{equation}
\end{lemma}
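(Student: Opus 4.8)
The plan is to prove the identity $(I^a)_b = L^a M^{b-da}$ by double inclusion, with the nontrivial direction being $(I^a)_b \subseteq L^a M^{b-da}$. First I would fix a minimal set of homogeneous generators $g_1,\dots,g_k$ for $I$, with $\deg g_j = e_j \le d$. An element of $(I^a)_b$ is a $\Cbb$-linear combination of products $p\cdot g_{j_1}\cdots g_{j_a}$, where $p$ is a monomial and the total degree is $b$. So it suffices to show each such product lies in $L^a M^{b-da}$, and then to verify the reverse containment $L^a M^{b-da} \subseteq (I^a)_b$, which is immediate since $L = I_d \subseteq I$ forces $L^a \subseteq (I^a)_{da}$ and multiplying by degree-$(b-da)$ forms keeps us inside $(I^a)_b$.

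For the main inclusion, the key observation is the recorded fact that $I_d \cdot \Cbb[x_0,\dots,x_n] = \oplus_{s\ge d} I_s$, i.e.\ $I_s = M^{s-d} I_d = M^{s-d} L$ for every $s \ge d$. Equivalently, every generator $g_j$ of degree $e_j < d$ satisfies $M^{d-e_j} g_j \subseteq L$: multiplying a generator by enough of the variables lands it in degree $d$, hence in $I_d = L$. Given the product $p\, g_{j_1}\cdots g_{j_a}$ of total degree $b$, I would distribute the $M$-factors: write $p$ together with $\prod_i M^{d-e_{j_i}}$ — which has total degree $\sum_i (d-e_{j_i}) = da - \sum e_{j_i}$ — and note that since $b \ge da$ and $\deg p = b - \sum e_{j_i} \ge da - \sum e_{j_i}$, the monomial $p$ has at least enough degree to supply $d-e_{j_i}$ variables to each factor $g_{j_i}$, promoting each $g_{j_i}$ into $L$; the leftover degree is exactly $b - da$, contributing the factor $M^{b-da}$. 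Concretely, one can write $p = m_1\cdots m_a\, m'$ where $m_i$ is a monomial of degree $d - e_{j_i}$ (possible because $\deg p \ge \sum_i(d-e_{j_i})$) and $m'$ has degree $b - da$; then $m_i g_{j_i} \in I_d = L$ for each $i$, and $m' \in M^{b-da}$, so $p\, g_{j_1}\cdots g_{j_a} = (m_1 g_{j_1})\cdots(m_a g_{j_a})\, m' \in L^a M^{b-da}$.

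The one place requiring a little care — and the main (mild) obstacle — is the bookkeeping that guarantees $\deg p \ge \sum_i (d - e_{j_i})$ so that the factorization $p = m_1\cdots m_a m'$ actually exists: this follows from $\deg p = b - \sum_i e_{j_i}$ together with the hypothesis $b \ge da$, giving $\deg p \ge da - \sum_i e_{j_i} = \sum_i(d-e_{j_i})$. I should also handle the degenerate case $a = 0$ separately (where the claim reads $(\Cbb[x_0,\dots,x_n])_b = M^b$, true for $b \ge 0$ with the convention $M^0 = \Cbb[x_0,\dots,x_n]$), and note that when some $e_{j_i} = d$ the corresponding $m_i$ is a degree-zero monomial, i.e.\ a scalar, which is consistent. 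No deep input is needed here: the lemma is purely combinatorial once one has the identity $\oplus_{s\ge d} I_s = M^{s-d} I_d$, which is itself elementary from the definition of the generating degree.
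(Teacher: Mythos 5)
Your proof is correct. The paper itself gives no argument for this lemma---it is simply ``recorded'' as an elementary fact---so there is nothing to compare against; your double-inclusion argument, spanning $(I^a)_b$ by monomial multiples $p\,g_{j_1}\cdots g_{j_a}$ of products of $a$ minimal generators (all of degree $e_j\le d$ by the definition of the generating degree) and then factoring $p=m_1\cdots m_a m'$ with $\deg m_i=d-e_{j_i}$, $\deg m'=b-da$, is exactly the natural verification, and the degree bookkeeping $\deg p=b-\sum_i e_{j_i}\ge\sum_i(d-e_{j_i})$ is the only point needing care, which you handle. Two cosmetic remarks: the case $a=0$ is excluded by the hypothesis $a\ge 1$, so you need not treat it; and you do not actually need the recorded identity $I_d\cdot\Cbb[x_0,\dots,x_n]=\oplus_{s\ge d}I_s$, since $m_ig_{j_i}\in I_d=L$ follows directly from $I$ being an ideal and $m_ig_{j_i}$ being homogeneous of degree $d$.
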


For example, it follows that for every $c\ge 0$, 
\begin{equation}\label{eq:pow}
((I^a)_b)^c = (L^a M^{b-d a})^c = L^{ac} M^{(b-da)c} = (I^{ac})_{bc}
\end{equation}
if $a\ge 1$ and $b\ge da$.

Next, let $\iota: X\hookrightarrow \Pbb^n$ be the closed subscheme defined by $I$ in $\Pbb^n$; 
we assume $X\subsetneq \Pbb^n$. 
Our task here is to relate the push-forward of the Segre class of $X$ to $\Pbb^n$ to the 
asymptotic behavior of the graded pieces $I_s$ of $I$, in terms of associated Kaveh-Khovanskii
intersection indices.
We will view each $I_s$ as a linear system, determining a rational map
\begin{equation}
\varphi_s:\Pbb^n \dashrightarrow \Pbb^{N_s}:=\Pbb({I_s}^\vee)\quad.
\end{equation}
The indeterminacies of $\varphi_s$ are resolved
by the closure $\Gamma_s$ of the graph of $\varphi_s$, a subvariety of dimension $n$ 
of $\Pbb^n \times \Pbb^{N_s}$.
The class of $\Gamma_s$ in the Chow group $A_n(\Pbb^n \times \Pbb^{N_s})$ may be 
written as
\[
[\Gamma_s] = g_0^{(s)} H^{N_s} + \cdots + g_n^{(s)} h^n H^{N_s-n}\quad,
\]
for integers $g_0^{(s)},\dots, g_n^{(s)}$. Here, $h$ and $H$ denote 
the (pull-backs of the) hyperplane classes from the first and second factor. We let
\[
G_s:= g_0^{(s)} + g_1^{(s)} h \cdots + g_n^{(s)} h^n 
\]
be the `shadow' of $[\Gamma_s]$ in $\Pbb^n$. The integers $g_i^{(s)}$ are the
{\em multidegrees\/} of the rational map~$\varphi_s$; the class $G_s$ packages 
the information of the multidegrees into a single class in $A_*\Pbb^n$.

It follows easily from the definition that $g^{(s)}_0=1$.
By definition, the `top' multidegree $g^{(s)}_n$ equals
\begin{equation}\label{eqn:top}
g^{(s)}_n = H^n \cdot [\Gamma_s]\quad;
\end{equation}
thus, $g^{(s)}$ equals the degree of the closure of the image of $\varphi_s$ 
if $\varphi_s$ is generically injective, hence birational onto its image. 
By~\eqref{eqn:top} and Lemma~\ref{lem:kk} 
we have
\begin{equation}\label{eqn:KK}
g^{(s)}_n = [I_s,\dots, I_s]\quad,
\end{equation}
the Kaveh-Khovanskii intersection index of $I_s$.

\subsection{}
We will use notation as in~\cite[\S2]{MR96d:14004}: for $G=\sum g_i h^i \in A_*\Pbb^n$
and $a\in \Zbb$, we let
\begin{equation}\label{eq:not}
G\otimes \cO(ah) := \sum_{i=0}^n \frac {g_i h^i}{(1+ah)^i}\quad.
\end{equation}
In fact, we consider the class
\[
\T_a(G):=\frac 1{1-ah} \left( G\otimes \cO(-ah)\right)\quad;
\]
explicitly,
\begin{equation}\label{eq:expli}
\T_a(G) = \sum_{i\ge 0} \left(\sum_{j=0}^i \binom ij a^{i-j} g_j\right) h^i\quad.
\end{equation}

\begin{lemma}\label{lem:shift}
With notation as above:
\begin{itemize}
\item[(i)] For $a,b\in \Zbb$, $\T_a(\T_b(G))=\T_{a+b}(G)$.
\item[(ii)] For $s\gg 0$, $\T_{-s}(G_s)$ is independent of $s$.
\item[(iii)] In fact, $\T_{-s}(G_s)=[\Pbb^n]-\iota_* s(X,\Pbb^n)$ for $s \ge d$, the generating 
degree of $I$.
\end{itemize}
\end{lemma}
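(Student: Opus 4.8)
Part (i) is a purely formal computation: using the identity $\T_a(G)=\frac{1}{1-ah}(G\otimes\cO(-ah))$ together with the standard compatibility of the $\otimes\cO(\cdot)$ notation from \cite{MR96d:14004} — namely $(G\otimes\cO(-ah))\otimes\cO(-bh)=G\otimes\cO(-(a+b)h)$ — one composes and checks that the scalar prefactors $\frac1{1-ah}$ and $\frac1{1-bh}$ combine, after the requisite twist, into $\frac1{1-(a+b)h}$. The cleanest route is to verify the assertion on the explicit formula \eqref{eq:expli}: both sides of $\T_a(\T_b(G))=\T_{a+b}(G)$ are polynomials in $a$, $b$, and the $g_j$, and the coefficient of $h^i$ on each side is $\sum_{j=0}^i\binom ij (a+b)^{i-j}g_j$ by the binomial theorem. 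I would present this as a short direct calculation. Given (i), statements (ii) and (iii) are equivalent: if (iii) holds for all $s\ge d$ then $\T_{-s}(G_s)$ is the constant $[\Pbb^n]-\iota_*s(X,\Pbb^n)$ for $s\gg 0$, giving (ii); conversely (ii) reduces the identification in (iii) to a single value of $s$, and (i) propagates it. So the mathematical content is concentrated in (iii).

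For (iii) the plan is to compute the shadow class $G_s$ of the graph $\Gamma_s$ of $\varphi_s\colon\Pbb^n\dashrightarrow\Pbb^{N_s}$ directly from intersection theory. The graph $\Gamma_s$ is the blow-up of $\Pbb^n$ along the base scheme of the linear system $I_s$, which is the subscheme $X_s$ defined by the ideal $I_s\cdot\Cbb[x_0,\dots,x_n]$; for $s\ge d$ this ideal equals $\oplus_{t\ge s}I_t$, which has the same associated subscheme of $\Pbb^n$ as $I$ itself, so $X_s=X$ as schemes for $s\ge d$. The linear system $I_s$ is a sub-system of $|\cO(s)|$, so on the blow-up $\pi\colon\Gamma_s\to\Pbb^n$ the pulled-back hyperplane class $H$ from $\Pbb^{N_s}$ equals $s\,\pi^*h - E_s$, where $E_s$ is the exceptional-divisor class (more precisely the class cut out by the ideal sheaf of the base scheme pulled back to $\Gamma_s$). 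The numbers $g_i^{(s)}$ are then $g_i^{(s)}=H^{n-i}h^i\cdot[\Gamma_s]=\pi_*\big((sh-E_s)^{n-i}\big)\cdot h^i$ on $\Pbb^n$, and by the definition of the Segre class of the base scheme together with Proposition~\ref{prop:44}-type bookkeeping (or directly \cite[Chapter 4]{85k:14004}), $\pi_*\big((sh-E_s)^{n-i}\big)$ expands into $s^{n-i}[\Pbb^n]$ minus correction terms governed by $\iota_*s(X,\Pbb^n)$ twisted by powers of $\cO(sh)$. Carefully assembled, this says precisely
\[
G_s = \frac{1}{1-sh}\Big(\big([\Pbb^n]-\iota_*s(X,\Pbb^n)\big)\otimes\cO(sh)\Big)=\T_s\big([\Pbb^n]-\iota_*s(X,\Pbb^n)\big),
\]
and then applying $\T_{-s}$ and using part (i) gives $\T_{-s}(G_s)=[\Pbb^n]-\iota_*s(X,\Pbb^n)$, which is (iii).

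The main obstacle is the bookkeeping in the middle step: translating the intersection numbers $H^{n-i}h^i\cdot[\Gamma_s]$ on the blow-up into the twisted Segre-class expression, i.e.\ recognizing that the "shadow of the graph of a linear subsystem of $|\cO(s)|$ with base scheme $X$" is exactly $\T_s$ applied to $[\Pbb^n]-\iota_*s(X,\Pbb^n)$. This is the residual-intersection / excess-intersection computation underlying the relation between multidegrees of a rational map and the Segre class of its base scheme, and one must be careful that the relevant equality $X_s=X$ holds only for $s\ge d$ — which is exactly where the hypothesis $s\ge d$ in (iii) enters, via Lemma~\ref{lem:ILM} (so that $I_s\cdot\Cbb[x_0,\dots,x_n]=\oplus_{t\ge s}I_t$ and the saturations agree). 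Once that identification is in hand, the $\otimes\cO$ formalism of \cite{MR96d:14004} makes the rest automatic, and part (i) packages the shift cleanly. I would organize the writeup as: (a) prove (i) by the explicit formula; (b) identify $\Gamma_s$ with $B\ell_X\Pbb^n$ and $H$ with $sh-E_s$ for $s\ge d$; (c) do the twisted-Segre computation to get $G_s=\T_s([\Pbb^n]-\iota_*s(X,\Pbb^n))$; (d) apply $\T_{-s}$ and invoke (i) to conclude (iii), and note (ii) follows.
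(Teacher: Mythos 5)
Your proposal is correct, but for part (iii) it takes a genuinely different route from the paper. The paper's proof of (iii) is essentially a citation: since $X$ is cut out scheme-theoretically by $I_s$ for $s\ge d$, it invokes \cite[Proposition~3.1]{MR1956868}, which is exactly the statement that the shadow $G_s$ of the graph of a rational map given by degree-$s$ forms cutting out $X$ equals $T_s\big([\Pbb^n]-\iota_* s(X,\Pbb^n)\big)$ (here I write $T_s$ for the shift operation of \eqref{eq:expli}); it then deduces (ii) from (iii). You instead re-derive the cited result from scratch: identify $\Gamma_s$ with $B\ell_X\Pbb^n$ (valid for $s\ge d$, since the ideal generated by $I_s$ is the truncation $\oplus_{t\ge s}I_t$, whose associated sheaf is the ideal sheaf of $X$), write $H=s\,\pi^*h-E$, and expand the multidegrees using $s(X,\Pbb^n)=\pi_*\big(E/(1+E)\big)$. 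This does close up as you claim: $\int h^{n-i}(sh-E)^i\cdot[\Gamma_s]=\sum_{k=0}^i\binom ik s^{i-k}\sigma_k$, which is the coefficient of $h^i$ in $T_s\big([\Pbb^n]-\iota_* s(X,\Pbb^n)\big)$, so $G_s=T_s\big([\Pbb^n]-\iota_* s(X,\Pbb^n)\big)$ and applying $T_{-s}$ with (i) gives (iii). What your route buys is self-containedness — it is, in effect, a proof of the proposition the paper cites — at the cost of the excess-intersection bookkeeping you flag; what the paper's route buys is a two-line proof. For (i), the paper computes with the twisting/action formulas of \cite{MR96d:14004} rather than with \eqref{eq:expli}; your direct verification via $\binom ik\binom kj=\binom ij\binom{i-j}{k-j}$ and the binomial theorem is an equally valid, more elementary alternative.

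One small correction to your bookkeeping: with the paper's convention $[\Gamma_s]=\sum_i g_i^{(s)}h^iH^{N_s-i}$, the multidegree is extracted as $g_i^{(s)}=\int h^{n-i}H^{i}\cdot[\Gamma_s]$, not $H^{n-i}h^i\cdot[\Gamma_s]$ (your formula computes $g_{n-i}^{(s)}$, consistently with \eqref{eqn:top} giving $g_n^{(s)}=H^n\cdot[\Gamma_s]$). This is only an index transposition and does not affect the assembled identity $G_s=T_s\big([\Pbb^n]-\iota_* s(X,\Pbb^n)\big)$, but it should be fixed in the writeup.
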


\begin{proof}
(i): The notation~\eqref{eq:not} defines an action (\cite[Proposition~2]{MR96d:14004}),
therefore (using~\cite[Proposition~1]{MR96d:14004})
\begin{align*}
\T_a(\T_b(G)) &= \frac 1{1-ah}\left(\left(\frac 1{1-bh} (G\otimes \cO(-bh))\right)
\otimes \cO(-ah)\right)\\
&=\frac 1{1-ah}\left(\frac {1-ah}{1-(a+b)h} (G\otimes \cO(-bh) \otimes \cO(-ah))\right) \\
&=\frac 1{1-(a+b)h} \left(G\otimes \cO(-(a+b)h)\right) \\
&=\T_{a+b}(G)\quad.
\end{align*}

(ii) follows from (iii). To prove (iii), let $s\ge d$. Then $X$ is defined scheme-theoretically by
$I_s$, and (iii) follows from~\cite[Proposition~3.1]{MR1956868}.
\end{proof}

\subsection{}
For notational convenience, define integers $\sigma_j$, $j=0,\dots, n$ for $X\subsetneq \Pbb^n$
as above, so that
\begin{equation}\label{eq:defsig}
[\Pbb^n]-\iota_* s(X,\Pbb^n) = \sum_{j=0}^n \sigma_j [\Pbb^{n-j}]\quad:
\end{equation}
that is, $\sigma_0=1$ while $-\sigma_j$ is the degree of the component of $s(X,\Pbb^n)$
of codimension $j$ in~$\Pbb^n$, for $j>0$.

\begin{prop}\label{prop:IdS}
Let $d$ be the generating degree for $I$, and let $s\ge d$. Then
\begin{equation}\label{eq:intindse}
 [I_s,\dots, I_s] = \sum_{j=0}^n \binom nj \sigma_j s^{n-j} 
\end{equation}
\end{prop}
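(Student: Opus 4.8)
The plan is to compute the intersection index $[I_s,\dots,I_s]$ using Lemma~\ref{lem:kk} applied to the graph $\Gamma_s$ of $\varphi_s$, and to relate the resulting expression to the class $\T_{-s}(G_s)$, which Lemma~\ref{lem:shift}(iii) identifies with $[\Pbb^n]-\iota_*s(X,\Pbb^n)$ precisely when $s\ge d$. Concretely, by~\eqref{eqn:KK} we have $[I_s,\dots,I_s]=g^{(s)}_n=H^n\cdot[\Gamma_s]$, where $H$ is the hyperplane class of the second factor $\Pbb^{N_s}$. So the task reduces to expressing $H^n\cdot[\Gamma_s]$ in terms of the multidegrees $g_i^{(s)}$ (equivalently, the class $G_s$) and then in terms of the $\sigma_j$ via the twist operator $\T_{-s}$.

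First I would recall that, for a subvariety of $\Pbb^n\times\Pbb^{N_s}$ of dimension $n$ whose class is $[\Gamma_s]=\sum_i g_i^{(s)}h^iH^{N_s-i}$, the section class under projection to $\Pbb^n$ behaves exactly like the $\otimes\cO$ / $\T$-operator formalism of~\cite{MR96d:14004}: the ``Segre-type'' relation between $[\Gamma_s]$ and $G_s$ is governed by the fact that on $\Gamma_s$ the class $H$ restricts to $sh+(\text{the exceptional part})$, i.e.\ $\cO(1)$ on $\Pbb^{N_s}$ pulls back to $\cO(s)$ twisted by the base locus. This is why the operator $\T_{-s}$ appears: passing from $H$ to $h$ on the graph is a twist by $\cO(-sh)$ together with a correction coming from the blow-up, which is precisely encoded in $\T_{-s}(G_s)=[\Pbb^n]-\iota_*s(X,\Pbb^n)$. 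Using~\eqref{eq:expli} with $a=-s$, the coefficient extraction gives $G_s=\T_s\left([\Pbb^n]-\iota_*s(X,\Pbb^n)\right)$, and by~\eqref{eq:expli} again this means
\[
g_n^{(s)}=\sum_{j=0}^n\binom nj s^{n-j}\,\sigma_j\,,
\]
since $\sigma_0,\dots,\sigma_n$ are the coefficients of $[\Pbb^n]-\iota_*s(X,\Pbb^n)$ in the basis $[\Pbb^{n-j}]$ as in~\eqref{eq:defsig}, and applying $\T_s$ multiplies out exactly as in the explicit formula~\eqref{eq:expli}. Combining with~\eqref{eqn:KK} yields~\eqref{eq:intindse}.

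The main obstacle I anticipate is justifying cleanly the identity $g_n^{(s)}=\left(\T_s\left([\Pbb^n]-\iota_*s(X,\Pbb^n)\right)\right)_0$-coefficient extraction — that is, making rigorous that the ``top multidegree'' $H^n\cdot[\Gamma_s]$ is read off from $G_s$ by the twist $\T_s$ rather than $\T_{-s}$, and matching conventions so that the sign of $s$ and the direction of the twist are correct. One has to track carefully that $\T_{-s}(G_s)$ is what stabilizes (Lemma~\ref{lem:shift}(ii)--(iii)), hence $G_s=\T_s\bigl([\Pbb^n]-\iota_*s(X,\Pbb^n)\bigr)$ by Lemma~\ref{lem:shift}(i), and then that the degree-$n$ component of $G_s$ in $A_*\Pbb^n$ — namely $g_n^{(s)}[\Pbb^0]$ — is computed from~\eqref{eq:expli} as $\sum_{j=0}^n\binom nj s^{n-j}\sigma_j$. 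Once the bookkeeping of the operator $\T_a$ is in place this is a one-line substitution; the only real content is invoking Lemma~\ref{lem:shift}(iii), which requires $s\ge d$, explaining the hypothesis in the statement.
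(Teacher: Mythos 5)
Your argument is correct and is essentially the paper's own proof: you invoke Lemma~\ref{lem:shift}(iii) (valid precisely for $s\ge d$) together with Lemma~\ref{lem:shift}(i) to get $G_s=\T_s\bigl([\Pbb^n]-\iota_*s(X,\Pbb^n)\bigr)$, then read off the coefficient of $h^n$ via~\eqref{eq:expli} and identify it with $[I_s,\dots,I_s]$ by~\eqref{eqn:KK}. The heuristic middle paragraph about how $H$ restricts on the graph is unnecessary (that content is exactly what Lemma~\ref{lem:shift}(iii) already encapsulates), but it does not affect the correctness of the argument.
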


\begin{proof}
By Lemma~\ref{lem:shift}(i), $\T_s\circ \T_{-s}=\id$. Therefore, Lemma~\ref{lem:shift}(iii)
gives
\[
G_s=\T_s\circ \T_{-s}(G_s)=\T_s\big([\Pbb^n]-\iota_* s(X,\Pbb^n)\big)
\]
for $s\ge d$. By~\eqref{eqn:KK}, $[I_s,\dots, I_s]$ equals the
coefficient of $h^n$ in $G_s$, and this gives the statement by~\eqref{eq:expli}. 
\end{proof}

As a consequence of Proposition~\ref{prop:IdS}, the information carried by $\iota_* s(X,\Pbb^n)$ 
is equivalent to the information carried by the intersection indices 
$[I_s,\dots, I_s]$ for $s\gg 0$. In one form or another, this observation is 
at the root of most methods used for the algorithmic computation of Segre classes, 
starting with~\cite{MR1956868}.

Our task is to extract $\iota_* s(X,\Pbb^n)$ from a suitably constructed Newton-Okounkov body.
For this purpose, we will need to generalize~\eqref{eq:intindse} to rational $s$.
Let $q=\frac ba\in \Qbb$, with $a,b\in \Zbb^{>0}$, and assume $q\ge d$. It will be convenient 
to adopt the following notation:
\begin{equation}\label{eq:Iq}
[I_q,\dots, I_q] 
:= \frac 1{a^n} [(I^a)_b,\dots, (I^a)_b]\quad.
\end{equation}

Note that $[I_q,\dots, I_q]$ is well-defined for rational $q\ge d$.
Indeed, for $a,b\in \Zbb$ such that $a\ge 1$ and $b\ge d a$ we have $(I^{ac})_{bc} = ((I^a)_b)^c$
for every $c\ge 0$, by~\eqref{eq:pow}; by \eqref{eq:multiadd}, i.e., the multiadditivity of the 
intersection index,
\[
[(I^{ac})_{bc},\dots, (I^{ac})_{bc}] 
= [((I^a)_b)^c,\dots,((I^a)_b)^c] 
= c^n [(I^a)_b,\dots,(I^a)_b]\quad,
\]
and therefore
\[
\frac 1{(ac)^n}[(I^{ac})_{bc},\dots, (I^{ac})_{bc}]  = \frac 1{a^n}  [(I^a)_b,\dots,(I^a)_b]
\]
as needed.

The formula obtained in Proposition~\ref{prop:IdS} for the integral intersection index
remains true for the fractional version.

\begin{corol}\label{cor:frac}
Let $d$ be the generating degree for $I$, and let $q\in \Qbb$, $q\ge d$. Then with notation as
above
\[
[I_q,\dots, I_q] = \sum_{j=0}^n \binom nj \sigma_j q^{n-j} \quad.
\]
\end{corol}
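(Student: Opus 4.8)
The plan is to reduce the fractional statement in Corollary~\ref{cor:frac} to the integral case already established in Proposition~\ref{prop:IdS}. Write $q=\frac ba$ with $a,b\in\Zbb^{>0}$ and $q\ge d$; note that then $b\ge da$, so Lemma~\ref{lem:ILM} applies and $(I^a)_b=L^aM^{b-da}$ is the relevant linear system. The key observation is that $(I^a)_b=(I^a)_b$ is, up to the scaling built into the definition~\eqref{eq:Iq}, exactly a graded piece of a genuine homogeneous ideal whose Segre class we understand: the ideal $I^a$ itself, which defines the same subscheme $X\subseteq\Pbb^n$ (since $V(I^a)=V(I)$ as schemes only set-theoretically — here I must be slightly careful, see below).

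First I would pin down the right homogeneous ideal to which Proposition~\ref{prop:IdS} should be applied. The natural candidate is $I^a$. Its generating degree is $da$ (the generators are $a$-fold products of generators of $I$, and the top degree among those is $d_r\cdot a=da$), and $b\ge da$ by hypothesis, so Proposition~\ref{prop:IdS} gives
\[
[(I^a)_b,\dots,(I^a)_b]=\sum_{j=0}^n\binom nj\sigma_j^{(a)}\,b^{n-j},
\]
where $\sigma_j^{(a)}$ are the integers attached to the subscheme $X^{(a)}\subseteq\Pbb^n$ defined by $I^a$ via~\eqref{eq:defsig}. The heart of the matter is then to show $\sigma_j^{(a)}=a^{\,j}\sigma_j$. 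Dividing the displayed formula by $a^n$ and using $b=aq$ turns $\frac1{a^n}\sum_j\binom nj\sigma_j^{(a)}(aq)^{n-j}$ into $\sum_j\binom nj\big(a^{-j}\sigma_j^{(a)}\big)q^{n-j}$, which is the desired $\sum_j\binom nj\sigma_j q^{n-j}$ precisely when $\sigma_j^{(a)}=a^j\sigma_j$.

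The identity $\sigma_j^{(a)}=a^j\sigma_j$ is the relation between the Segre class of $X$ and that of the subscheme $X^{(a)}$ defined by the $a$-th power ideal $I^a$ (equivalently, the $a$-th "thickening"). I would establish it by translating~\eqref{eq:defsig} into the $\T$-notation: by Lemma~\ref{lem:shift}(iii), $[\Pbb^n]-\iota_*s(X^{(a)},\Pbb^n)=\T_{-s}(G_s^{(a)})$ for $s\ge da$, while $[\Pbb^n]-\iota_*s(X,\Pbb^n)=\T_{-s}(G_s)$ for $s\ge d$. So it suffices to relate $G_s^{(a)}$ (the shadow of the graph of the map given by $(I^a)_s$) to $G_{s/a}$ — more precisely, since $(I^a)_s=((I)_{s/a})^a$-type relations hold, the rational map attached to $(I^a)_b$ factors through an $a$-Veronese composed with the map attached to the appropriate piece of $I$, so its multidegrees scale: $g_i$ for $(I^a)_b$ equals $a^i$ times $g_i$ for the corresponding map of $I_{b/a}$. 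Feeding this through~\eqref{eq:expli} and the $\T$-shift yields $\sigma_j^{(a)}=a^j\sigma_j$. Alternatively, and perhaps more cleanly, one can sidestep $X^{(a)}$ entirely: apply Proposition~\ref{prop:IdS} directly with the ideal $I$ and integer $s=b$ when $a=1$, and for general $a$ use multiadditivity~\eqref{eq:multiadd} together with the already-proved well-definedness computation (the displayed chain right before the statement shows $\frac1{a^n}[(I^a)_b,\dots]$ depends only on $q$), reducing to a sequence of $q$ with bounded denominator where~\eqref{eq:intindse} applies after clearing denominators — but this requires knowing~\eqref{eq:intindse} for the ideal $I^a$, which is the same input.

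The main obstacle I anticipate is the subtle point that $I^a$ and $I$ define the same subscheme $X$ only up to nilpotents: $V(I^a)=V(I)$ with the reduced structure agreeing, but the scheme structures differ, and Segre classes are \emph{not} insensitive to nilpotents in general. So the clean statement "$s(X^{(a)},\Pbb^n)$ is determined by $s(X,\Pbb^n)$" needs justification — and indeed the correct relation $\sigma_j^{(a)}=a^j\sigma_j$ is \emph{not} $\sigma_j^{(a)}=\sigma_j$. The safe route is therefore not to reason about $X^{(a)}$ as a scheme at all, but to work with the linear systems $(I^a)_b$ directly: their associated Kodaira maps factor through Veronese embeddings of the maps for $I$ (by Lemma~\ref{lem:ILM} and~\eqref{eq:pow}), the multidegrees scale accordingly by the $a$-Veronese argument used in~\eqref{eq:multiadd}, and then Proposition~\ref{prop:IdS} applied with the \emph{generating degree $da$ of $I^a$} gives the integral formula whose normalization by $a^{-n}$ is exactly Corollary~\ref{cor:frac}. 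I would make sure the hypothesis $q\ge d$ (equivalently $b\ge da$) is used precisely where Lemma~\ref{lem:ILM} and the generating-degree bound for $I^a$ require it.
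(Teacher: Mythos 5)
Your proposal is correct, and its skeleton is the same as the paper's: apply Proposition~\ref{prop:IdS} to the ideal $I^a$, whose generating degree is $da$, and divide by $a^n$ using the definition~\eqref{eq:Iq}. The difference lies in how the key scaling fact is justified. The paper simply quotes from [Alu94] the relation that $s(X^{(a)},\Pbb^n)$ is obtained from $s(X,\Pbb^n)$ by multiplying the codimension-$i$ component by $a^i$ (equivalently $\sigma_j^{(a)}=a^j\sigma_j$), whereas you derive this relation internally, from Lemma~\ref{lem:shift}(iii) together with the Veronese scaling of multidegrees used in~\eqref{eq:multiadd} and the explicit formula~\eqref{eq:expli}; this makes the corollary self-contained, at the cost of a short computation. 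Two small points to tighten. First, your phrase ``$g_i$ for $(I^a)_b$ equals $a^i$ times $g_i$ for the corresponding map of $I_{b/a}$'' glosses over the fact that $b/a$ need not be an integer, and for $a\nmid b$ the system $(I^a)_b=L^aM^{b-da}$ is not a pure $a$-th power of a graded piece of $I$; the fix is immediate: since $\T_{-s}(G^{(a)}_s)$ is independent of $s\ge da$ by Lemma~\ref{lem:shift}, compute it at $s=ab'$ with an integer $b'\ge d$, where $(I^a)_{ab'}=(I_{b'})^a$ by~\eqref{eq:pow}, so $G^{(a)}_{ab'}=\sum_i a^i g_i^{(b')}h^i$ and $\T_{-ab'}(G^{(a)}_{ab'})=\sum_i a^i\sigma_i h^i$ follows from~\eqref{eq:expli}. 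Second, your closing suggestion to ``not reason about $X^{(a)}$ as a scheme at all'' while still invoking Proposition~\ref{prop:IdS} for $I^a$ is internally inconsistent, since that proposition is precisely a statement about $\iota^{(a)}_*s(X^{(a)},\Pbb^n)$; an $X^{(a)}$-free argument does exist (expand $\int(a\,\tilde\varphi_d^*H+(b-da)h)^n\cdot[\Til V]$ directly using Lemma~\ref{lem:ILM} and the multidegrees of $\varphi_d$), but as written you do not carry it out, so the first route is the one that should be kept.
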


\begin{proof}
Let $\iota^{(a)}: X^{(a)}\hookrightarrow \Pbb^n$ be the subscheme defined by the ideal $I^a$. 
Then $s(X^{(a)}, \Pbb^n)$ is obtained from $s(X,\Pbb^n)$ by multiplying by $a^i$ the component
of codimension $i$ in $\Pbb^n$ (\cite{MR96d:14004}). Therefore
\[
[\Pbb^n]-\iota^{(a)}_* s(X^{(a)},\Pbb^n) = \sum_{j=0}^n \sigma_j a^j [\Pbb^{n-j}]\quad.
\]
Further, note that the generating degree $I^a$ is $ad$. 
By Proposition~\ref{prop:IdS}, for $b\in \Zbb$, $b\ge ad$,
\[
[(I^a)_b,\dots, (I^a)_b] = \sum_{j=0}^n \binom nj \sigma_ja^j b^{n-j}\quad.
\]
With $q=\frac ba\ge d$, it follows that
\[
[I_q,\dots, I_q] = \frac 1{a^n} [(I^a)_b,\dots, (I^a)_b] 
= \frac 1{a^n} \sum_{j=0}^n \binom nj \sigma_j a^jb^{n-j} 
= \sum_{j=0}^n \binom nj \sigma_j\frac{b^{n-j}}{a^{n-j}} \quad,
\]
and this is the statement.
\end{proof}


\section{Construction}\label{s:constr}

\subsection{}\label{ss:cprels}
In this section we provide our construction of the Newton-Okounkov body associated with
a homogeneous ideal $I\subseteq \Cbb[x_0,\dots, x_n]$.
The construction is an adaptation of the construction of the `global Okounkov body' 
given in~\cite[\S4]{MR2571958}; as in {\it loc.~cit.} we abuse language and refer to
the result of the construction as a {\em body,\/} although the object is not compact.
It will be a closed convex subset of $\Rbb^{n+1}$ with nonempty interior.

As in~\S\ref{s:prelim}, we consider the homogeneous pieces of $I$ and of its powers.
The body will map to~$\Rbb^1$ in such a way that the fiber over $s\in \Zbb^{\gg 0}$ will
be the (ordinary) Newton-Okounkov body associated with $I_s$. As in~\S\ref{ss:NOdef},
we use a section of the line bundle $\cO(s)$, chosen compatibly with products,
to identify the linear systems ${I^t}_s\subseteq H^0(\Pbb^n,\cO(s))$ with spaces of rational 
functions on $\Pbb^n$.
For example, we can choose $x_0^s$; equivalently, we can de-homogenize by setting
$x_0=1$. This will be done implicitly in what follows.

Fix a $\Zbb^n$-valued valuation $v$ on $\Cbb[x_1,\dots, x_n]$. 
Consider the subset of $\Rbb^n\times \Rbb^2$ defined by
\begin{equation}\label{eq:dUI}
U_I:=\{(\ua, s,t)\in \Rbb^n\times \Rbb^2 \,|\, s\in \Zbb^{\ge 0}, t\in \Zbb^{\ge 0}, 
\ua\in v((I^t)_s)\}\quad.
\end{equation}
Let $\Sigma(U_I)$ be the closed convex cone generated by $U_I$. Setting the last coordinate 
to~$1$ defines a hyperplane $\{t=1\}\cong \Rbb^n\times \Rbb^1$. The intersection
\begin{equation}\label{eq:uDdef}
\uDelta(I):= \Sigma(U_I)\cap \{t=1\}\subseteq \Rbb^n\times \Rbb^1
\end{equation}
is a closed convex set. 
The projection $\Rbb^n\times \Rbb^1\to \Rbb^1$, $(\ua,s)\mapsto s$, defines
a projection $\pi: \uDelta(I)\to \Rbb^1$. We denote by $\uDelta_s$ the fiber $\pi^{-1}(s)$,
viewed as a subset of $\Rbb^n$.

\begin{defin}\label{def:NO}
The {\em Newton-Okounkov body of the ideal $I$\/} is the image $\Delta(I)$ 
of $\uDelta(I)$ via the isomorphism $\tau:\Rbb^n\times \Rbb^1 \to \Rbb^{n+1}$,
$((a_1,\dots, a_n), s) \mapsto (s-(a_1+\cdots+a_n), a_1,\dots, a_n)$.
\qede\end{defin}

\begin{remark}\label{rem:nei}
In the proof of the proposition that follows it will be shown that $\Delta(I)$ has
non-empty interior; in this sense it is a `body'.
\qede\end{remark}

\begin{remark}\label{rem:alt}
An alternative description of the body may be given in the style of~\cite[(1.5)]{MR2571958}:
\[
\Delta(I) = \text{closed convex hull } \left(\bigcup_{t'\ge 1} \frac 1{t'}\cdot \tau(U_{I,{t'}})\right)\quad,
\]
where $U_{I,{t'}} = U_I \cap \{t=t'\}$.
\qede\end{remark}

\subsection{}
Recall that $d$ denotes the generating degree for $I$.

\begin{prop}\label{prop:fibers}
Let $q\in \Qbb$, $q>d$, and write  $q=\frac ba$ with $a,b\in \Zbb^{>0}$.\newline
Then $\uDelta_q=\frac 1a \Delta((I^a)_b)$.
\end{prop}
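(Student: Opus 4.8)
The plan is to unwind the definitions on both sides and show the two closed convex sets coincide by a direct containment argument, using the power identity~\eqref{eq:pow} as the bridge. First I would recall that, by Definition~\ref{def:NO} and the discussion in~\S\ref{ss:cprels}, $\uDelta_q$ is the fiber over $s=q$ of $\uDelta(I)=\Sigma(U_I)\cap\{t=1\}$, equivalently the slice $\{\ua \,:\, (\ua,q,1)\in\Sigma(U_I)\}$. Meanwhile $\Delta((I^a)_b)$ is the ordinary Newton–Okounkov body of the linear system $(I^a)_b$, i.e.\ $\Sigma(U)\cap(\Rbb^n\times\{1\})$ where $U=\{(\ua,u)\,:\,u\in\Zbb^{\ge 0},\ \ua\in v(((I^a)_b)^u)\}$. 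So the goal becomes: rescale by $\tfrac1a$ and match the two cones. I expect it is cleanest to compare $\Sigma(U_I)$ restricted to the ray-direction $(s,t)\propto(b,a)$ with the cone $\Sigma(U)$.

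The key computation is the following. A point $(\ua, qu, u)$ with $u$ a positive integer lies in $U_I$ precisely when $\ua\in v((I^{u})_{qu})$; but $qu = \tfrac{bu}{a}$ need not be an integer, so instead I would work along the integer sublattice: $(\ua, bc, ac)\in U_I$ for $c\in\Zbb^{\ge 0}$ iff $\ua\in v((I^{ac})_{bc})$, and by~\eqref{eq:pow} (valid since $a\ge 1$, $b\ge da$, indeed $b>da$) this is $v(((I^a)_b)^c)$. Hence the points of $U_I$ lying on rays of slope $(b,a)$ in the last two coordinates are exactly $\{(\ua, bc, ac) : c\in\Zbb^{\ge 0},\ \ua\in v(((I^a)_b)^c)\}$, which under the linear map $(\ua,bc,ac)\mapsto(\ua, ac)$ is carried bijectively onto $aU$ (the semigroup $U$ for $(I^a)_b$, with last coordinate scaled by $a$). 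Taking closed convex cones and then slicing — $\Sigma(U_I)\cap\{t=1\}$ at $s=q$ versus $a\cdot(\Sigma(U)\cap\{u=1\})$ — gives one inclusion immediately: $\uDelta_q \supseteq \tfrac1a\Delta((I^a)_b)$, since the cone on the right is generated by a subset of $U_I$ (up to the rescaling) and slicing commutes with these operations.

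The main obstacle is the reverse inclusion $\uDelta_q\subseteq\tfrac1a\Delta((I^a)_b)$: a point in the fiber of $\Sigma(U_I)$ over $(q,1)$ is a priori a limit of convex combinations of points of $U_I$ whose $(s,t)$-coordinates are arbitrary pairs of nonnegative integers, not just multiples of $(b,a)$. To control this I would use $q>d$ (strict inequality, which is why the hypothesis is $q>d$ and not $q\ge d$): for a point $(\ua_0, q, 1)$ in the cone, any expression as a positive combination of generators $(\ua_i, s_i, t_i)\in U_I$ forces $\sum \lambda_i s_i = q\sum\lambda_i t_i$ with $s_i\le (\text{something})t_i$ built from the generating-degree bound $s_i\ge$ nothing but $s_i$ can be as small as $0$ while $t_i$ is forced small — here one invokes that $v((I^t)_s)$ is empty unless $s\ge dt$ when $t\ge 1$ (Lemma~\ref{lem:ILM} and its consequences), so each contributing generator with $t_i\ge 1$ has $s_i\ge dt_i$, and generators with $t_i=0$ contribute to the cone only through the direction coming from $v$ of the ambient ring. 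The strict inequality $q>d$ then lets me absorb the $t_i=0$ contributions and the `slack' $s_i - dt_i$ into the $M=\langle x_0,\dots,x_n\rangle$-part, exactly as in Lemma~\ref{lem:ILM}, reducing every generator to one supported on the ray $(b,a)$ up to quantities that vanish in the limit. I would then conclude that the slice equals $\tfrac1a$ times the closed convex hull of $\bigcup_c \tfrac1c v(((I^a)_b)^c)$, which by the Lazarsfeld–Mustaţă description (Remark~\ref{rem:alt}, applied to the ordinary body) is $\tfrac1a\Delta((I^a)_b)$. Along the way this also yields Remark~\ref{rem:nei}, since $\Delta((I^a)_b)$ has nonempty interior in $\Rbb^n$ and $\pi$ is surjective onto an interval, so $\Delta(I)$ has nonempty interior in $\Rbb^{n+1}$.
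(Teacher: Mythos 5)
Your setup and the easy inclusion $\uDelta_q\supseteq\frac 1a\Delta((I^a)_b)$ are fine, and they match the paper's first step: identify the points of $U_I$ on the ray $(s,t)\in\Rbb(b,a)$ with the graded semigroup of $(I^a)_b$ via~\eqref{eq:pow} (the paper does this after reducing to $\gcd(a,b)=1$, a reduction you gloss over but which is needed for your ``exactly'' claim). The reverse inclusion, however, is where the real content lies, and your argument for it has a genuine gap. First, the inequality you want to lean on is false: $v((I^t)_s)$ is \emph{not} empty for $d_0t\le s<dt$ when $I$ has generators of degree $d_0<d$ (e.g.\ $I=(x_0,x_1^2)$ has $(I)_1\ne 0$), so ``each contributing generator with $t_i\ge 1$ has $s_i\ge dt_i$'' does not hold, and the proposed absorption of the slack into the $M$-part has nothing to absorb it into; Lemma~\ref{lem:ILM} only describes $(I^a)_b$ for $b\ge da$ and says nothing about lower degrees. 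Second, and more fundamentally, a point of $\Sigma(U_I)$ over $(s,t)=(q,1)$ is a limit of positive combinations of elements of $U_I$ of \emph{mixed} slopes $s_i/t_i$ (including $t_i=0$ points, which are abundant since $(I^0)_s$ is the full degree-$s$ piece), and nothing in your sketch shows that such mixtures contribute nothing beyond the ray: this is false for cones generated by arbitrary sets and is rescued only by the \emph{semigroup} structure of $U_I$ (products multiply, so $v((I^{t_1})_{s_1})+v((I^{t_2})_{s_2})\subseteq v((I^{t_1+t_2})_{s_1+s_2})$), which your argument never invokes. The phrase ``reducing every generator to one supported on the ray $(b,a)$ up to quantities that vanish in the limit'' is where a theorem is needed, not supplied.

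The paper closes exactly this gap by quoting \cite[Proposition~A.1]{MR2571958}: for a semigroup, the slice of the generated closed cone by a rational hyperplane equals the cone generated by the sliced semigroup, \emph{provided} the hyperplane meets the interior of the cone. That interiority hypothesis is the true role of $q>d$: it gives $b-da>0$, so $(I^a)_b=L^aM^{b-da}$ (Lemma~\ref{lem:ILM}) has the full linear system $M=\langle x_0,\dots,x_n\rangle$ as a factor, the Kodaira map is generically injective (Lemma~\ref{lem:inj}), hence $\Delta((I^a)_b)$ has positive volume by \cite[Theorem~4.9]{MR2950767} and $\Sigma(U_{a,b})$, and therefore $\Sigma(U_I)\cap H_q$, is full-dimensional in the hyperplane---this is also what justifies the nonempty-interior assertion you state but do not prove at the end. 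If you want a self-contained argument avoiding \cite[Proposition~A.1]{MR2571958}, you would have to reprove its semigroup-approximation content; the convexity bookkeeping in your sketch does not substitute for it.
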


\begin{proof}
First, assume that $a$ and $b$ are relatively prime. Let 
\[
U_{a,b} = \{(\ua,u)\in \Rbb^n\times \Rbb^1\,|\, u\in \Zbb^{\ge 0}\,,\, \ua\in v(((I^a)_b)^u)\}\quad;
\]
so $\Delta((I^a)_b)=\Sigma(U_{a,b})\cap \{u=1\}$ by definition. 
By~\eqref{eq:pow} we have $((I^a)_b)^u = (I^{au})_{bu}$ for $u\ge 0$ (since $b\ge da$), hence
\[
U_{a,b} = \{(\ua,u)\in \Rbb^n\times \Rbb^1\,|\, u\in \Zbb^{\ge 0}\,,\, \ua\in v((I^{au})_{bu})\}\quad.
\]
On the other hand, consider the hyperplane $H_q\subseteq \Rbb^n\times \Rbb^2$ 
consisting of $(\ua,s,t))$ with $s=qt$. Since $(s,t)\in \Zbb^2$ for $(\ua,s,t)$ in $U_I$ and 
$a$, $b$ are relatively prime, along $U_I$ the condition $s=qt$ is equivalent to 
$(s,t) = (bu,au)$ for some $u\in \Zbb$. It follows that the linear map $(\ua,u)\mapsto (\ua, bu, au)$
induces a bijection
\begin{equation}\label{eq:Uab}
\xymatrix{
U_{a,b} \ar[r]^-\cong & U_I\cap H_q\quad,
}
\end{equation}
which we will use to identify these two sets. (And note $t=au$ under this identification.)

Since $\frac ba>d$, $(I^a)_b = L^a M^{b-da}$ with $L=I_d$, $M=\langle x_0,\dots, x_n\rangle$
(Lemma~\ref{lem:ILM}).
Therefore the Kodaira map associated with $(I^a)_b$ is generically injective by Lemma~\ref{lem:inj}.
It follows that $\Delta((I^a)_b)$ and hence $\Sigma(U_{a,b})$ are full-dimensional
(since $\Delta((I^a)_b)$ has nonzero volume, 
cf.~\cite[Proposition~4.8 and Theorem~4.9(a)]{MR2950767}): 
i.e., $\Sigma(U_{a,b})$ has dimension $n+1$. 
By~\eqref{eq:Uab}, $\Sigma(U_I) \cap H_q$ {\em contains\/} (a copy of) $\Sigma(U_{a,b})$. 
Therefore, hyperplane sections of $\Sigma(U_I)$ have dimension $n+1$, and it follows that
$\Sigma(U_I)$ has dimension $n+2$, that is, it has nonempty interior. 
This also proves that $\uDelta(I)$ and hence $\Delta(I)$ have nonempty interior, as
promised in Remark~\ref{rem:nei}. 

Further, the argument shows that the hyperplane $H_q$ meets the interior of $\Sigma(U_I)$.
The hypotheses of~\cite[Proposition~A.1]{MR2571958} are satisfied, therefore
\[
\Sigma(U_I) \cap H_q = \Sigma(U_I \cap H_q) = \Sigma(U_{a,b})\quad,
\]
and hence
\[
\uDelta_q = \big(\Sigma(U_I) \cap \{t=1\}\big) \cap \{s=q\} 
= \Sigma(U_I) \cap H_q \cap \{t=1\} 
= \Sigma(U_{a,b}) \cap \{au=1\}\quad.
\]
Since $\Sigma(U_{a,b})$ is a cone,
\[
\Sigma(U_{a,b}) \cap \{au=1\} = \frac 1a \left(\Sigma(U_{a,b}) \cap \{u=1\}\right)
=\frac 1a \Delta((I^a)_b)\quad,
\]
concluding the verification if $a$ and $b$ are relatively prime. The general case follows,
since $\frac 1{ac} \Delta((I^{ac})_{bc}) = \frac 1a \Delta((I^a)_b)$ for all $c>0$, again since
$\Sigma(U_{a,b})$ is a cone.
\end{proof}

\subsection{}
Recall the definition of the integers $\sigma_j$, from~\eqref{eq:defsig}. In the 
following statement, $\Vol$ stands for the `normalized' $n$-dimensional volume, 
that is, $n!$ times the ordinary Euclidean volume in dimension~$n$
(denoted $\Vol_n$ in~\cite{MR2950767}).

\begin{corol}\label{cor:vols}
Let $r\in \Rbb$, $r>d$. Then $\Vol(\uDelta_r) = \sum_{i=0}^n \binom ni \sigma_{n-i} r^i$.
\end{corol}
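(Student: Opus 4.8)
The statement to prove is $\Vol(\uDelta_r) = \sum_{i=0}^n \binom ni \sigma_{n-i} r^i$ for real $r > d$. I would prove this in two stages: first for rational $r = q > d$ by combining the fiber description of Proposition~\ref{prop:fibers} with the Kaveh--Khovanskii volume theorem and Corollary~\ref{cor:frac}, and then extend to all real $r$ by a continuity/density argument.

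First I would handle the rational case. Fix $q = \frac ba \in \Qbb$ with $q > d$, $a,b \in \Zbb^{>0}$. By Proposition~\ref{prop:fibers}, $\uDelta_q = \frac 1a \Delta((I^a)_b)$, so by the scaling behavior of volume in $\Rbb^n$ we get $\Vol(\uDelta_q) = \frac 1{a^n}\Vol(\Delta((I^a)_b))$. Now I invoke the theorem of Kaveh--Khovanskii (\cite[Theorem~4.9]{MR2950767}), which applies precisely because the Kodaira map attached to $(I^a)_b = L^a M^{b-da}$ is generically injective (Lemma~\ref{lem:inj}, already used in the proof of Proposition~\ref{prop:fibers}): for a generically injective linear system the normalized volume of the Newton--Okounkov body equals the intersection index. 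Hence $\Vol(\Delta((I^a)_b)) = [(I^a)_b,\dots,(I^a)_b]$. Combining, and using the definition~\eqref{eq:Iq} of the fractional index, $\Vol(\uDelta_q) = \frac 1{a^n}[(I^a)_b,\dots,(I^a)_b] = [I_q,\dots,I_q]$. Then Corollary~\ref{cor:frac} evaluates this as $\sum_{j=0}^n \binom nj \sigma_j q^{n-j}$, which is exactly $\sum_{i=0}^n \binom ni \sigma_{n-i} q^i$ after reindexing $i = n-j$ and using $\binom nj = \binom n{n-j}$. This settles all rational $q > d$.

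To pass to real $r > d$, I would argue that $r \mapsto \Vol(\uDelta_r)$ is continuous on $(d,\infty)$. The cleanest route is to note that $\Sigma(U_I)$ is a closed convex cone in $\Rbb^n\times\Rbb^2$ whose interior is nonempty (established inside the proof of Proposition~\ref{prop:fibers}) and which meets the hyperplane $H_q$ in its interior for every rational $q>d$; by convexity the same holds for every real $r>d$, and the fibers $\uDelta_r$ of a fixed convex body under a linear projection vary continuously (in the Hausdorff metric, on the open set of values where the fiber is full-dimensional), so their $n$-dimensional volumes vary continuously. Since $\Vol(\uDelta_q)$ agrees with the polynomial $P(q) := \sum_{i=0}^n \binom ni \sigma_{n-i} q^i$ on the dense set $\Qbb \cap (d,\infty)$, and both sides are continuous, they agree for all $r > d$.

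The main obstacle is the continuity claim in the last step—everything else is assembly of results already proved. One must be a little careful that $\uDelta_r$ stays genuinely $n$-dimensional (so that its volume is a sensible quantity and varies continuously rather than jumping): this is where nonemptiness of the interior of $\Sigma(U_I)$, together with the fact that $H_r$ meets that interior for all real $r > d$, is essential. An alternative to the Hausdorff-continuity argument, which may be cleaner to write, is to observe that for a full-dimensional closed convex cone $C \subseteq \Rbb^{n+2}$, the function sending a real parameter to the $n$-volume of the corresponding slice is given by an integral of the form $\int_{\{(\ua,s)\in C\cap\{t=1\},\ \sum a_i + (\text{shift}) = r\}} d\ua$, manifestly continuous in $r$ on the range where the slice is bounded and full-dimensional; boundedness for $r$ in a compact subinterval of $(d,\infty)$ follows since each $\uDelta_q$ is compact (it is a genuine Newton--Okounkov body up to scaling) and the slices are squeezed between nearby rational ones by convexity.
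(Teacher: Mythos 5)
Your proof is correct and follows essentially the same route as the paper's: reduce to real $r>d$ from the rational case by continuity, and for rational $q=\frac ba>d$ combine Proposition~\ref{prop:fibers}, the generic injectivity of the Kodaira map of $(I^a)_b=L^aM^{b-da}$ (Lemmas~\ref{lem:ILM} and~\ref{lem:inj}) with the Kaveh--Khovanskii volume theorem, the definition~\eqref{eq:Iq}, and Corollary~\ref{cor:frac}. The paper dispatches the continuity reduction in a single sentence, so your more detailed justification of that step is extra care rather than a different approach.
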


\begin{proof}
By continuity, it suffices to verify the given formula for $r=q\in \Qbb$, $q>d$. 
Let then $q\in \Qbb$, $q=\frac ba>d$, with $a,b$ positive integers. 

Let $L=I_d$, $M=\langle x_0,\dots, x_n\rangle$. By Lemma~\ref{lem:ILM} we have 
$(I^a)_b=L^a M^{b-da}$; note that $b-da>0$ since $q>d$. By Lemma~\ref{lem:inj}, the
Kodaira map associated with $(I^a)_b$ is birational onto its image. 
By~\cite[Theorem~4.9(2)]{MR2950767},
\[
[(I^a)_b,\dots, (I^a)_b] = \Vol(\Delta((I^a)_b))
\]
(we are normalizing the volume by the factorial of the dimension), and therefore
\[
\Vol\left(\frac 1a \Delta((I^a)_b)\right) = \frac 1{a^n} \Vol(\Delta((I^a)_b)) 
=\frac 1{a^n} [(I^a)_b,\dots, (I^a)_b] 
= [I_q,\dots, I_q]
\]
adopting~\eqref{eq:Iq}. By Proposition~\ref{prop:fibers} and Corollary~\ref{cor:frac}, 
it follows that
\[
\Vol(\uDelta_q) = \sum_{i=0}^n \binom ni \sigma_{n-i} q^i
\]
as needed.
\end{proof}

\subsection{}\label{ss:monom}
If $I$ is generated by {\em monomials\/} in the variables $x_0,\dots, x_n$, then the construction presented above, with respect to the monomial valuation determined by any $n$ of the variables,
reduces to the usual Newton polytope.

Indeed, for a polynomial $P\in \Cbb[x_0,\dots, x_n]$, we may 
let $v(P)=(a_1,\dots, a_n)$ be the smallest exponent list of a monomial in $P|_{x_0=1}$, 
with respect to the lexicographic order; for example, $v(x_0^{a_0}x_1^{a_1}\cdots x_n^{a_n}) = 
(a_1,\dots, a_n)$.
This leads easily to an effective description of the sets $v((I^t)_s)$:
\[
(a_1,\dots, a_n) \in v((I^t)_s) \iff  x_0^{s-a_1-\dots -a_n} x_1^{a_1}\cdots x_n^{a_n} \in (I^t)_s\quad.
\]
It follows that
\[
U_I:=\{(\ua, s,t) \,|\, s\in \Zbb^{\ge 0}, t\in \Zbb^{\ge 0}, \tau(\ua,s)\in m(I^t)\}
\]
where $m(I^t)\subseteq \Zbb^{n+1}$ is the set of exponents of monomials in $I^t$, and
$\tau$ is the isomorphism defined in Definition~\ref{def:NO}.
For a fixed $t=t'$, and with $U_{I,t'} = U_I \cap \{t=t'\}$, this shows that
\[
\tau(U_{I,t'}) = m(I^{t'})\quad.
\]
The alternative description of $\Delta(I)$ given in Remark~\ref{rem:alt} then gives
\[
\Delta(I) = \text{convex hull }\big( m(I) \big)\subseteq \Rbb^{n+1}\quad,
\]
and this is the (unbounded) Newton polytope associated with $I$.


\section{From the Newton-Okounkov body to the Segre class}\label{s:segre1}

Let $X\subseteq \Pbb^n$ be a closed subscheme, and let $I\subseteq \Cbb[x_0,\dots, x_n]$
be any homogeneous ideal defining $X$ scheme-theoretically.

We have associated with $I$ a `Newton-Okounkov body' (Definition~\ref{def:NO})
$\Delta(I)$. This body depends on the defining ideal $I$, on the dehomogenizing factor,
and the choice of a
valuation. The following result expresses the degrees of the components of the
Segre class of $X$ in $\Pbb^n$ in terms of the truncation of a series computed
as an integral over $\Delta(I)$. 

By definition, the body $\Delta(I)$ is a subset of $\Rbb^{n+1}$. We let
$a_0,\dots, a_n$ denote coordinates in this space. The integral appearing
in the result is the ordinary `calculus' integral, depending on a parameter $t$
(which we may take to range in $\Rbb^{>0}$).

\begin{theorem}\label{thm:segre1}
Denote by $\iota$ the inclusion $X\hookrightarrow \Pbb^n$, and let $h$
be the hyperplane class in $\Pbb^n$. Define a power series $\sum \rho_i t^i$
by the identity
\begin{equation}\label{eq:int1}
\int_{\Delta(I)} \frac{(n+1)! t^{n+1} da_0\cdots da_n}
{(1+(a_0+\cdots + a_n)t)^{n+2}} = \sum_{i\ge 0} \rho_i t^i\quad.
\end{equation}
Then
\[
\iota_* s(X,\Pbb^n) = \left(1-\sum_{i=0}^n \rho_i h^i\right) \cap [\Pbb^n]\quad.
\]
\end{theorem}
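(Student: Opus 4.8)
The strategy is to reduce the claimed identity to the volume computation of Corollary~\ref{cor:vols} by an explicit change of the order of integration. First I would set up the integral over $\Delta(I)$ by fibering $\Delta(I)$ over the value of the coordinate $a_0+\cdots+a_n$. Under the isomorphism $\tau$ of Definition~\ref{def:NO}, the hyperplane $\{a_0+\cdots+a_n = r\}$ in $\Rbb^{n+1}$ corresponds to $\{s = r\}$ in $\Rbb^n\times\Rbb^1$, so the fiber of this sum-of-coordinates map over $r$ is (a copy of) $\uDelta_r$. Since $\tau$ has determinant $\pm 1$, the Euclidean $n$-dimensional volume of the slice is preserved, so the $n$-dimensional Euclidean volume of the slice of $\Delta(I)$ at level $r$ equals $\Vol(\uDelta_r)/n!$. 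Hence
\[
\int_{\Delta(I)} \frac{(n+1)!\, t^{n+1}\, da_0\cdots da_n}{(1+(a_0+\cdots+a_n)t)^{n+2}}
= \int_0^\infty \frac{(n+1)!\, t^{n+1}}{(1+rt)^{n+2}} \cdot \frac{\Vol(\uDelta_r)}{n!}\, dr
= \int_0^\infty \frac{(n+1)\, t^{n+1}\, \Vol(\uDelta_r)}{(1+rt)^{n+2}}\, dr\quad.
\]

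Next I would substitute the formula from Corollary~\ref{cor:vols}. That corollary gives $\Vol(\uDelta_r) = \sum_{i=0}^n \binom ni \sigma_{n-i} r^i$ only for $r > d$, so strictly speaking the integrand is known only on $(d,\infty)$; to handle $[0,d]$ I would argue that $\Delta(I)$ is contained in the region $\{a_i \ge 0\}$ (the valuation and the dehomogenization force nonnegativity, as already visible in the monomial case of~\S\ref{ss:monom}), so $\uDelta_r$ is empty for $r<0$ and $\Vol(\uDelta_r)$ is a well-defined polynomial-bounded quantity on $[0,d]$; its contribution to the integral is $O(1)$ as a function of $t$ whereas, crucially, the precise value for large $r$ is what governs the leading asymptotics. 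The cleanest route is to observe that the series $\sum_i \rho_i t^i$ is determined by the behavior we can control: expanding $\frac{(n+1)t^{n+1}}{(1+rt)^{n+2}}$ and integrating term by term against $r^i$ over $(0,\infty)$ using the Beta integral $\int_0^\infty \frac{r^i\, dr}{(1+rt)^{n+2}} = \frac{i!\,(n-i)!}{(n+1)!}\, t^{-i-1}$ (valid for $0\le i\le n$), one gets, for each $i\le n$, a contribution $\binom ni\sigma_{n-i}\cdot (n+1)\cdot \frac{i!(n-i)!}{(n+1)!} = \sigma_{n-i}$ to $\rho_{n-i}$. Thus $\rho_j = \sigma_j$ for $0\le j\le n$, and then the statement follows by~\eqref{eq:defsig}: $1-\sum_{j=0}^n\rho_j h^j = 1-\sum_{j=0}^n \sigma_j h^j = \iota_* s(X,\Pbb^n)$ after the identification $h^j\cap[\Pbb^n] = [\Pbb^{n-j}]$.

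The main obstacle is the discrepancy between the region $(d,\infty)$ where Corollary~\ref{cor:vols} applies and the full range of integration $[0,\infty)$, together with making the "integrate term by term / extract coefficients" step rigorous for a rational (indeed, eventually polynomial-times-power-bounded) integrand. The honest fix is that replacing $\Vol(\uDelta_r)$ by the polynomial $P(r) := \sum_{i=0}^n\binom ni\sigma_{n-i}r^i$ on all of $[0,\infty)$ changes the integral by $\int_0^d \frac{(n+1)t^{n+1}(\Vol(\uDelta_r)-P(r))}{(1+rt)^{n+2}}\,dr$, which is a finite integral over a compact interval of a bounded function times $t^{n+1}/(1+rt)^{n+2}$; as $t\to\infty$ the kernel $\frac{(n+1)t^{n+1}}{(1+rt)^{n+2}}$ on $[0,d]$ is $O(t^{-1})$ uniformly, so this error term is $O(t^{-1})$, hence contributes nothing to the coefficients $\rho_0,\dots,\rho_n$ of the (formal / asymptotic) expansion in $t$. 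I would phrase the whole computation as an asymptotic expansion in $1/t$ — equivalently, following the remark after the statement of Theorem~\ref{thm:segre1in}, treat $t$ as a positive real parameter and expand — so that only the values of $\Vol(\uDelta_r)$ for $r$ large matter for the first $n+1$ coefficients, which is exactly the regime governed by Corollary~\ref{cor:vols}. This reduces Theorem~\ref{thm:segre1} to the Beta-integral identity above plus bookkeeping, and no genuinely new geometric input beyond Corollary~\ref{cor:vols} is needed.
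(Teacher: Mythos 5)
Your overall route is the same as the paper's: change variables by $\tau$ (Definition~\ref{def:NO}), fiber over $s=a_0+\cdots+a_n$ so the integral becomes $\int_0^\infty (n+1)\,t^{n+1}\Vol(\uDelta_r)(1+rt)^{-(n+2)}\,dr$, insert Corollary~\ref{cor:vols}, and finish with the Beta integral; the bookkeeping $\binom ni\sigma_{n-i}\cdot(n+1)\cdot\frac{i!(n-i)!}{(n+1)!}t^{n-i}=\sigma_{n-i}t^{n-i}$ is correct. The genuine problem is your treatment of the range $[0,d]$, where Corollary~\ref{cor:vols} gives no information. The series $\sum_i\rho_i t^i$ in~\eqref{eq:int1} is the expansion of the integral in \emph{ascending} powers of $t$, i.e.\ its behavior as $t\to 0^+$; it is not an asymptotic expansion as $t\to\infty$, so controlling the error up to $O(t^{-1})$ at $t\to\infty$ cannot determine $\rho_0,\dots,\rho_n$. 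Moreover the bound you claim is false: at $r=0$ the kernel $(n+1)t^{n+1}(1+rt)^{-(n+2)}$ equals $(n+1)t^{n+1}$, and indeed $\int_0^d (n+1)t^{n+1}(1+rt)^{-(n+2)}dr = t^n\bigl(1-(1+dt)^{-(n+1)}\bigr)$ grows like $t^n$ as $t\to\infty$; since $\Vol(\uDelta_r)-P(r)$ has no reason to vanish near $r=0$, your error term is in general of order $t^n$, not $O(t^{-1})$. Note also that in the $t\to\infty$ regime the kernel concentrates at $r\asymp 1/t\to 0$, so that regime is governed by the \emph{small}-$r$ behavior of $\Vol(\uDelta_r)$ --- the opposite of what you need; it is the $t\to 0$ regime that is controlled by large $r$, which is exactly where Corollary~\ref{cor:vols} applies.

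The fix is immediate and is the paper's own argument: on the compact interval $[0,d]$ the integrand is $t^{n+1}$ times $(n+1)\bigl(\Vol(\uDelta_r)-P(r)\bigr)(1+rt)^{-(n+2)}$, which is bounded uniformly for $r\in[0,d]$ and $t\ge 0$ (use $(1+rt)^{-(n+2)}\le 1$ and boundedness of $\Vol(\uDelta_r)$ there), and is analytic in $t$ near $0$ after integration; hence the discrepancy between your integral and the one with $P(r)$ over all of $[0,\infty)$ is $t^{n+1}\cdot(\text{a series in }t)$, i.e.\ $\equiv 0 \bmod t^{n+1}$, so it cannot affect $\rho_0,\dots,\rho_n$. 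With that replacement your Beta-integral computation gives $\rho_j=\sigma_j$ for $0\le j\le n$, and the theorem follows from~\eqref{eq:defsig}. A further small point: you should not assert that $\Delta(I)\subseteq\{a_i\ge 0\}$ for a general valuation $v$ (it is true in the monomial setting of \S\ref{ss:monom}, but is not needed and not justified in general); all the argument requires is that $s\ge 0$ on $\uDelta(I)$, which is clear from the construction of $U_I$, together with the boundedness of $\Vol(\uDelta_s)$ on $[0,d]$.
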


In other words, the coefficients $\rho_0,\dots, \rho_n$ of the series defined 
in~\eqref{eq:int1} agree with the numbers $\sigma_0,\dots, \sigma_n$ defined 
in~\eqref{eq:defsig}.

\begin{proof}
We perform a change of variables, using the isomorphism~$\tau$ from
Definition~\ref{def:NO}:
\[
\tau((a_1,\dots, a_n), s) = (s-(a_1+\cdots+a_n), a_1,\dots, a_n)\quad.
\]
The absolute value of the jacobian is~$1$, therefore
\begin{align*}
\int_{\Delta(I)} \frac{(n+1)! t^{n+1} da_0\cdots da_n}
{(1+(a_0+\cdots + a_n)t)^{n+2}} 
&=\int_{\uDelta(I)} \frac{(n+1)! t^{n+1} da_1\cdots da_{n-1}ds}
{(1+(s-(a_1+\cdots+a_n)+ a_1+\cdots + a_n)t)^{n+2}} \\
&=\int_{\uDelta(I)} \frac{(n+1)! t^{n+1} da_1\cdots da_{n-1}ds}
{(1+st)^{n+2}} \quad.
\end{align*}
We have $s\ge 0$ on $\uDelta(I)$, so this integral equals
\[
\int_0^\infty \left(\int_{\uDelta_s} da_1\cdots da_n\right) \frac{(n+1)!\, t^{n+1}}
{(1+st)^{n+2}}\, ds
=(n+1) \int_0^\infty \Vol(\uDelta_s) \frac{t^{n+1}}{(1+st)^{n+2}}\, ds\quad.
\]
Now recall that $d$ denotes the generating degree for $I$. The function $\Vol(\uDelta_s)$
is bounded (and independent of $t$) on $s\in [0,d]$; therefore
\[
\int_0^d \Vol(\uDelta_s) \frac{t^{n+1}}{(1+st)^{n+2}}\, ds \equiv 0 \mod (t^{n+1})
\]
as a series in $t$. It follows that
\[
(n+1) \int_d^\infty \Vol(\uDelta_s) \frac{t^{n+1}}{(1+st)^{n+2}}\, ds
=\rho_0+\rho_1 t+ \cdots +\rho_n t^n + \text{h.o.t.}\quad:
\]
that is, since we are only interested in the coefficients $\rho_i$ with $i\le n$, we can
perform the integration over $[d,\infty)$. By Corollary~\ref{cor:vols},
\[
\Vol(\uDelta_s) = \sum_{i=0}^n \binom ni \sigma_{n-i} s^i
\]
for $s>d$. Therefore, 
\[
\rho_0+\rho_1 t+ \cdots +\rho_n t^n + \text{h.o.t.}
=(n+1) \int_d^\infty \sum_{i=0}^n \binom ni \sigma_{n-i} s^i \frac{t^{n+1}}{(1+st)^{n+2}}\, ds
\]
and by the same token used above, we may change the integration range back to
$[0,\infty)$:
\begin{equation}\label{eq:rhos}
\rho_0+\rho_1 t+ \cdots +\rho_n t^n + \text{h.o.t.}
=(n+1) \int_0^\infty \sum_{i=0}^n \binom ni \sigma_{n-i} s^i \frac{t^{n+1}}{(1+st)^{n+2}}\, ds
\end{equation}

The right-hand side is evaluated by using the following lemma.
\begin{lemma}
For $0\le i\le n$,
\begin{equation}\label{eq:intform}
(n+1)\int_0^\infty \frac{s^i}{(1+st)^{n+2}} ds = \frac 1{\binom ni t^{i+1}}\quad.
\end{equation}
\end{lemma}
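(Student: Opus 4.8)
The plan is to reduce the integral to a Beta-function evaluation by a single substitution. First I would substitute $u = st$, so that $s = u/t$, $ds = du/t$, and $s^i = u^i/t^i$. The integrand becomes
\[
\frac{s^i}{(1+st)^{n+2}}\, ds = \frac{u^i/t^i}{(1+u)^{n+2}}\cdot \frac{du}{t} = \frac{1}{t^{i+1}}\cdot \frac{u^i}{(1+u)^{n+2}}\, du\quad.
\]
Thus the left-hand side of~\eqref{eq:intform} equals $\dfrac{n+1}{t^{i+1}}\displaystyle\int_0^\infty \frac{u^i\, du}{(1+u)^{n+2}}$, and the claim reduces to showing
\[
(n+1)\int_0^\infty \frac{u^i\, du}{(1+u)^{n+2}} = \frac{1}{\binom ni}\quad.
\]

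The remaining integral is a standard Beta integral: $\int_0^\infty \frac{u^{x-1}}{(1+u)^{x+y}}\,du = B(x,y) = \frac{\Gamma(x)\Gamma(y)}{\Gamma(x+y)}$. Applying this with $x = i+1$ and $y = n+1-i$ (so that $x+y = n+2$, matching the exponent) gives
\[
\int_0^\infty \frac{u^i\, du}{(1+u)^{n+2}} = \frac{i!\,(n-i)!}{(n+1)!}\quad,
\]
valid because $0\le i\le n$ ensures $y = n+1-i \ge 1 > 0$ so the integral converges at infinity, and $x = i+1 \ge 1 > 0$ ensures convergence at $0$. Multiplying by $n+1$ yields $\dfrac{(n+1)\,i!\,(n-i)!}{(n+1)!} = \dfrac{i!\,(n-i)!}{n!} = \dfrac{1}{\binom ni}$, which is exactly what is needed.

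If one prefers to avoid quoting the Beta-function formula, the integral $J_i := \int_0^\infty \frac{u^i}{(1+u)^{n+2}}\,du$ can be computed by an elementary induction on $i$: integration by parts (differentiating $u^i$, integrating $(1+u)^{-(n+2)}$) gives a reduction $J_i = \frac{i}{n+1-i}\, J_{i-1}$ for $1\le i\le n$, with the base case $J_0 = \int_0^\infty (1+u)^{-(n+2)}\,du = \frac{1}{n+1}$ obtained directly. Unwinding the recursion produces $J_i = \frac{i!\,(n-i)!}{(n+1)!}$, and the conclusion follows as above. There is no real obstacle here; the only point requiring a word of care is the convergence of the improper integral, which is guaranteed precisely by the hypothesis $i\le n$ (so that the integrand decays like $u^{i-n-2} = u^{-(n+2-i)}$ with exponent $n+2-i \ge 2$ at infinity).
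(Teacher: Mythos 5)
Your proof is correct and takes essentially the same route as the paper: both reduce the integral to a Beta-function evaluation by a change of variables (you use $u=st$ and the $[0,\infty)$ form $\int_0^\infty u^{x-1}(1+u)^{-(x+y)}\,du=B(x,y)$, while the paper uses $z=1/(1+st)$ and the $[0,1]$ form $\int_0^1 z^{n-i}(1-z)^i\,dz$), arriving at the same value $\frac{i!\,(n-i)!}{(n+1)!\,t^{i+1}}$. The elementary integration-by-parts recursion you sketch as an alternative is also fine, just not needed.
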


\begin{proof}
Apply the change of variable $z=1/(1+st)$, i.e., $s=\frac 1t (\frac 1z-1)$
(treating $t$ as a positive parameter):
\[
\int_0^\infty \frac{s^i}{(1+st)^{n+2}} ds = -\int_0^1 \frac{\frac 1{t^i}\left(\frac 1z-1\right)^i}
{\frac 1{z^{n+2}}} \left(-\frac 1{tz^2}\right) dz 
= \frac 1{t^{i+1}} \int_0^1 z^{n-i}(1-z)^i dz\quad.
\]
The last integral is an instance of the {\em Beta function,\/} so we obtain
\begin{align*}
\int_0^\infty \frac{s^i}{(1+st)^{n+2}} ds &= \frac 1{t^{i+1}} B(n-i+1,i+1)
=\frac 1{t^{i+1}} \frac{\Gamma(n-i+1)\Gamma(i+1)}{\Gamma(n+2)} \\
&=\frac 1{t^{i+1}} \frac{(n-i)! i!}{(n+1)!}
\end{align*}
with the stated consequence.
\end{proof}

With this understood,~\eqref{eq:rhos} gives
\[
\rho_0+\rho_1 t+ \cdots +\rho_n t^n 
\equiv \sum_{i=0}^n \sigma_{n-i} t^{n-i} \mod (t^{n+1})\quad,
\]
and this proves the needed equality $\rho_i=\sigma_i$ for $i=0,\dots, n$.
\end{proof}


\section{From the Newton-Okounkov body to Segre zeta functions}\label{s:segre2}

\subsection{}
Theorem~\ref{thm:segre1} only gives information on the first $n+1$ coefficients of the
expansion
\[
\int_{\Delta(I)} \frac{(n+1)! t^{n+1} da_0\cdots da_n}
{(1+(a_0+\cdots + a_n)t)^{n+2}} = \sum_{i\ge 0} \rho_i t^i\quad.
\]
This begs the question of what the other coefficients may mean. 
In this section we will prove that the integral computes the `Segre zeta function' of the given 
ideal.

Let $I\subseteq \Cbb[x_0,\dots, x_n]$ be a homogeneous ideal. For $N\ge n$, let $X^{(N)}$
denote the subscheme of $\Pbb^N$ determined by the extension $I^{(N)}$ of $I$ to 
$\Cbb[x_0,\dots, x_N]$, and denote by $\iota^{(N)}$ the inclusion~$X^{(N)}\hookrightarrow \Pbb^N$.
(Thus $I=I^{(n)}$, $X=X^{(n)}$.)
In~\cite{MR3709134} it is shown that there exists a power series 
$\zeta_I(t)=\sum_{i\ge 0} s_i t^i$ such that {\em for all\/} $N\ge n$
\begin{equation}\label{eq:zeta}
\zeta_I(h)\cap [\Pbb^N] = \sum_{i= 0}^\infty s_i h^i \cap [\Pbb^N]
=\iota^{(n)}_* s(X^{(N)},\Pbb^N)\quad,
\end{equation}
where we denote by $h$ the hyperplane class in $\Pbb^N$.
Of course $h^i\cap [\Pbb^N]=0$ for $i> N$; thus, for any given $N$ only $s_0,\dots, s_N$ 
contribute nonzero components in~\eqref{eq:zeta}. It is also shown in~\cite{MR3709134} that
$\zeta_I(t)$ is rational, with poles constrained by the degree sequence of $I$.

We can let
\[
1-\zeta_I(t) = \sum_{j=0}^\infty \sigma_j t^j\quad,
\]
where for any $N\ge n$ the coefficients $\sigma_j$ are defined as in~\eqref{eq:defsig}: 
that is, the definition of $\sigma_j$ is independent of $n$ for $n\ge j$ (this follows 
from~\cite[Lemma~5.2]{MR3709134}) and we can assemble the information simultaneously
for all $j$ into a single power series.
Our last goal is to prove that the integral appearing in Theorem~\ref{thm:segre1} agrees with
this function.

\begin{theorem}\label{thm:segre2}
Let $I\subseteq \Cbb[x_0,\dots, x_n]$ be a homogeneous ideal, and let 
$\Delta(I)\subseteq \Rbb^{n+1}$
be the corresponding Newton-Okounkov body. Then
\begin{equation}\label{eq:int2}
\int_{\Delta(I)} \frac{(n+1)!\, t^{n+1} da_0\cdots da_n}
{(1+(a_0+\cdots + a_n)t)^{n+2}} = 1- \zeta_I(t)\quad.
\end{equation}
\end{theorem}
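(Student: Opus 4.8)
The plan is to bootstrap from Theorem~\ref{thm:segre1}, which already identifies the first $n+1$ coefficients $\rho_0,\dots,\rho_n$ of the integral with $\sigma_0,\dots,\sigma_n$. What remains is to pin down all the higher coefficients, and this I would do by letting the number of variables grow. For $N\ge n$, let $I^{(N)}\subseteq\Cbb[x_0,\dots,x_N]$ be the extension of $I$, let $X^{(N)}\subseteq\Pbb^N$ be the subscheme it cuts out, and form $\Delta(I^{(N)})\subseteq\Rbb^{N+1}$ by the construction of \S\ref{s:constr}, using a valuation on $\Cbb[x_1,\dots,x_N]$ chosen (as explained below) compatibly with the one used for $I$. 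Theorem~\ref{thm:segre1} applied to $I^{(N)}$ in $\Pbb^N$ shows that the first $N+1$ coefficients of
\[
\int_{\Delta(I^{(N)})}\frac{(N+1)!\,t^{N+1}\,da_0\cdots da_N}{(1+(a_0+\cdots+a_N)t)^{N+2}}
\]
are the coefficients $\sigma_0,\dots,\sigma_N$ in $[\Pbb^N]-\iota^{(N)}_*s(X^{(N)},\Pbb^N)=\sum_j\sigma_j[\Pbb^{N-j}]$, which by the stability of the Segre zeta function---the independence of $\sigma_j$ from the ambient dimension for $n\ge j$, \cite[Lemma~5.2]{MR3709134}---coincide with the first $N+1$ coefficients of $1-\zeta_I(t)$. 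Hence, once the value of this integral is shown to be independent of $N$, its $i$-th coefficient will equal the $i$-th coefficient of $1-\zeta_I(t)$ for all $i\le N$ and all $N$, hence for all $i$, which is the theorem.

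The core of the proof is thus the $N$-independence, and it is enough to treat the passage from $N$ to $N+1$. I would choose the valuation $v^{(N+1)}$ on $\Cbb[x_1,\dots,x_{N+1}]$ to be the composite that first records $j_0=\mathrm{ord}_{x_{N+1}}$, the order of vanishing along $\{x_{N+1}=0\}$, and then applies $v^{(N)}$ to the leading coefficient: writing $f=\sum_j x_{N+1}^j f_j$, this is $v^{(N+1)}(f)=(j_0,v^{(N)}(f_{j_0}))$, a $\Zbb^{N+1}$-valued valuation. Since $(I^{(N+1)})^t$ is the extension of $I^t$ to $\Cbb[x_0,\dots,x_{N+1}]$, a homogeneous $F=\sum_j x_{N+1}^j F_j$ of degree $s$ lies in $(I^{(N+1)})^t$ precisely when $F_j\in(I^t)_{s-j}$ for every $j$; dehomogenizing ($x_0=1$) and evaluating $v^{(N+1)}$ one obtains
\[
v^{(N+1)}\big(((I^{(N+1)})^t)_s\big)=\bigsqcup_{j\ge0}\big(\{j\}\times v^{(N)}((I^t)_{s-j})\big)\quad.
\]
Working with closed convex cones and hyperplane sections as in the proof of Proposition~\ref{prop:fibers}---so that forming $\Sigma(U_{I^{(N+1)}})$ and then intersecting with $\{t=1\}$ and with $\{j=\mathrm{const}\}$ commutes with this ``stacking''---I would upgrade the displayed identity to the fiber description
\[
\uDelta_s(I^{(N+1)})=\{(j,\ua):j\ge0,\ \ua\in\uDelta_{s-j}(I^{(N)})\}
\]
(up to the evident permutation of coordinates), and hence, by Fubini along the $j$-coordinate,
\[
\Vol\big(\uDelta_s(I^{(N+1)})\big)=(N+1)\int_0^s\Vol\big(\uDelta_u(I^{(N)})\big)\,du
\]
with the factorial-normalized volumes of Corollary~\ref{cor:vols}. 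Finally, performing the change of variables $\tau$ as in the proof of Theorem~\ref{thm:segre1} reduces both the $N$- and $(N+1)$-integrals to integrals of $\Vol(\uDelta_s)$ against $t^{k+1}(1+st)^{-(k+2)}$; substituting the volume identity and integrating by parts in $s$ then shows the $(N+1)$-integral equals the $N$-integral. The boundary terms in the integration by parts vanish because $\Vol(\uDelta_s(I^{(N)}))$ is eventually polynomial in $s$ (Corollary~\ref{cor:vols}), and the integrals over $[0,d]$ that occur are finite because the integral in the statement converges.

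The step I expect to be the main obstacle is the cone-level fiber identity for $\uDelta_s(I^{(N+1)})$. One must check not only that the compatible valuation $v^{(N+1)}$ yields the ``stacked'' graded semigroup above, but also that taking the closed convex cone $\Sigma(U_{I^{(N+1)}})$ commutes with intersection by the hyperplanes $\{t=1\}$ and $\{j=\mathrm{const}\}$; as in Proposition~\ref{prop:fibers}, this should come down to verifying that these hyperplanes meet the interior of the cone, together with an application of \cite[Proposition~A.1]{MR2571958}. The rest---the integration by parts and the finiteness of $\Vol(\uDelta_s)$ on $[0,d]$---is routine.
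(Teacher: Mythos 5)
Your proposal is correct and follows essentially the same route as the paper: bootstrap from Theorem~\ref{thm:segre1} applied to the extensions $I^{(N)}$, extend the valuation by recording the order of vanishing along $x_{N+1}$, and show that the resulting body stacks over $\Delta(I)$ so that integrating out the new variable returns the lower-dimensional integral. The only difference is executional: the paper's Lemma~\ref{lem:ext} proves $\Delta(I')=\Delta(I)\times\Rbb^{\ge 0}$ outright---the stacked semigroup identity is transported by the linear map $\lambda$, so no interior condition or \cite[Proposition~A.1]{MR2571958} is needed (your flagged obstacle dissolves)---after which a one-line Fubini integration over $a_{n+1}$ replaces your fiberwise volume identity and integration by parts.
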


In other words, with notation as above and as in~\eqref{eq:int1}, $\rho_i = \sigma_i$ for
{\em all\/} $i\ge 0$. The parameter $t$ is implicitly assumed to be a positive real in computing
the integral. The content of Theorem~\ref{thm:segre2} is that the integral then equals a 
well-defined rational function of~$t$, and this function equals $1-\zeta_I(t)$.
In particular, the integral is independent of the choices (of a valuation and of a dehomogenizing
term) used to define $\Delta(I)$.

\subsection{}\label{ss:arguseg2}
Theorem~\ref{thm:segre1} shows that~\eqref{eq:int2} is true {\em modulo $t^{n+1}$.\/}
In particular, we have
\[
\int_{\Delta(I^{(N)})} \frac{(N+1)! t^{N+1} da_0\cdots da_N}
{(1+(a_0+\cdots + a_N)t)^{N+2}} \equiv
\int_{\Delta(I)} \frac{(n+1)! t^{n+1} da_0\cdots da_n}
{(1+(a_0+\cdots + a_n)t)^{n+2}} \mod t^{n+1}
\]
for all $N\ge n$. In order to prove Theorem~\ref{thm:segre2}, it suffices to prove that the 
two integrals are equal {\em modulo $t^{N+1}$.\/} Inductively, it suffices to show that{\small
\begin{equation}\label{eq:key}
\int_{\Delta(I^{(n+1)})} \frac{(n+2)! t^{n+2} da_0\cdots da_{n+1}}
{(1+(a_0+\cdots + a_{n+1})t)^{n+3}} \equiv
\int_{\Delta(I)} \frac{(n+1)! t^{n+1} da_0\cdots da_n}
{(1+(a_0+\cdots + a_n)t)^{n+2}} \mod t^{n+2}\quad.
\end{equation}}
Now recall that the definition of the Newton-Okounkov body $\Delta(I^{(n+1)})$ 
depends on the 
choice of a valuation, but Theorem~\ref{thm:segre1} implies that the integral modulo $t^{n+2}$ 
is independent of this choice. Thus we may assume that a valuation $v$ has been chosen
for $\Abb^n$, and our task will be to show that there exists a corresponding valuation $v'$ 
for $\Abb^{n+1}$ with respect to which~\eqref{eq:key} holds.
Theorem~\ref{thm:segre2} will follow from~\eqref{eq:key}.

\subsection{}
Let $v$ be a $\Zbb^n$-valued valuation on $\Cbb[x_1,\dots, x_n]$.
We extend $v$ to a valuation~$v'$ on $\Cbb[x_1,\dots, x_{n+1}]$ as follows, 
cf.~\cite[Definition~2.26]{MR2950767}. Given $f\in \Cbb[x_1,\dots, x_{n+1}]$, write
\[
f= x_{n+1}^e g
\]
with $x_{n+1} \not | g$. Define
\[
v'(f) = (v(g|_{x_{n+1}=0}), e)\quad.
\]
Then $v'$ is a $\Zbb^{n+1}$-valued valuation.
Given a homogeneous ideal $I$ in $\Cbb[x_0,\dots, x_n]$, denote by $I'$ the extension of
$I$ to $\Cbb[x_0,\dots, x_{n+1}]$. We want to compare the bodies $\Delta(I)\subseteq
\Rbb^{n+1}$, $\Delta(I')\subseteq \Rbb^{n+2}=\Rbb^{n+1}\times \Rbb^1$
defined by means of $v$, $v'$ respectively.
(Also recall that we are silently dehomogenizing by setting $x_0=1$; cf.~\S\ref{ss:cprels}.)

\begin{lemma}\label{lem:ext}
With notation as above,
$\Delta(I')=\Delta(I) \times \Rbb^{\ge 0}$.
\end{lemma}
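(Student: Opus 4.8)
The plan is to work directly with the defining cones $\Sigma(U_{I'})$ and $\Sigma(U_I)$ and track how the extra variable $x_{n+1}$ interacts with the valuation $v'$. First I would record the key structural fact about the graded pieces: since $I'$ is the extension of $I$ to one more variable, the degree-$s$ piece of $(I')^t=(I^t)'$ is the span of all products $x_{n+1}^e \cdot m$, where $m$ ranges over monomials of degree $s-e$ (in $x_0,\dots,x_n$) times elements of $(I^t)_{s-e}$; concretely, $(I^t)'_s = \bigoplus_{e=0}^{s} x_{n+1}^e \cdot (I^t)_{s-e}\cdot\Cbb[x_0,\dots,x_n]_{?}$ — more precisely every element of $(I^t)'_s$ is a $\Cbb$-linear combination of terms $x_{n+1}^e f$ with $f\in (I^t)_{s-e}$ (after dehomogenizing $x_0=1$, $f\in (I^t)_{s-e}$ viewed as a space of rational functions). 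The point is that the homogeneous components of $(I^t)'$ in degree $s$ are built from those of $I^t$ in all degrees $\le s$ by multiplying by powers of $x_{n+1}$.

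Next I would compute $v'$ on such elements. Given $f=x_{n+1}^e g$ with $x_{n+1}\nmid g$, we have $v'(f)=(v(g|_{x_{n+1}=0}),e)$; and for a generic (general) element of $(I^t)'_s$, the lowest-valuation term in the $v'$-order will be achieved by an element of the form $x_{n+1}^e\cdot f$ with $f$ a general element of $(I^t)_{s-e}$ not divisible by $x_{n+1}$, contributing the value $(v(f),e)\in\Zbb^n\times\Zbb$. Therefore the value set satisfies
\[
v\big((I^t)'_s\big) \;=\; \bigcup_{e=0}^{s}\big(v\big((I^t)_{s-e}\big)\times\{e\}\big)\quad,
\]
i.e.\ a point $(\ua,a_{n+1})$ lies in $v((I^t)'_s)$ iff $a_{n+1}\in\Zbb^{\ge 0}$, $a_{n+1}\le s$, and $\ua\in v((I^t)_{s-a_{n+1}})$. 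In terms of the graded semigroup $U_{I'}$ this reads: $(\ua,a_{n+1},s,t)\in U_{I'}$ iff $(\ua,s-a_{n+1},t)\in U_I$ and $a_{n+1}\in\Zbb^{\ge 0}$. So the linear map $(\ua,a_{n+1},s,t)\mapsto(\ua,s-a_{n+1},t)$ identifies $U_{I'}$ with $U_I\times(\Zbb^{\ge 0}\text{-axis})$ in a way that respects the grading in $t$, hence carries $\Sigma(U_{I'})$ onto $\Sigma(U_I)\times\Rbb^{\ge 0}$ (the cone generated by a product being the product of the cones, once the grading variable is shared). Intersecting with $\{t=1\}$ gives $\uDelta(I') = \{(\ua,a_{n+1},s):(\ua,s-a_{n+1})\in\uDelta(I),\ a_{n+1}\ge 0\}$. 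Finally I would chase this through the coordinate change $\tau$ of Definition~\ref{def:NO} in dimension $n+1$: $\tau$ sends $((a_1,\dots,a_{n+1}),s)$ to $(s-(a_1+\cdots+a_{n+1}),a_1,\dots,a_{n+1})$, and since $s-(a_1+\cdots+a_{n+1}) = (s-a_{n+1})-(a_1+\cdots+a_n)$ is exactly the first coordinate produced by applying the $n$-dimensional $\tau$ to $((a_1,\dots,a_n),s-a_{n+1})$, the description above becomes precisely $\Delta(I')=\Delta(I)\times\Rbb^{\ge 0}$, with the last factor recording $a_{n+1}$.

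The main obstacle I anticipate is the step asserting $v((I^t)'_s)=\bigcup_e v((I^t)_{s-e})\times\{e\}$: one inclusion is immediate from the definition of $v'$ on monomials $x_{n+1}^e f$, but for the reverse inclusion one must check that no cancellation or interaction among the generators of $(I^t)'_s$ produces valuations outside this set — i.e.\ that the valuation of a general element of $(I^t)'_s$ really is governed by its `$x_{n+1}$-homogeneous' pieces. This is exactly the content of \cite[Definition~2.26]{MR2950767} and the fact that $v'$ as defined there is a genuine valuation (so $v'(f+g)\ge\min(v'(f),v'(g))$ with equality generically), combined with the decomposition of $(I^t)'$ by powers of $x_{n+1}$; I would spell this out carefully and cite the relevant properties of such `extension' valuations. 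The remaining steps — commuting $\Sigma(-)$ past a product with a ray, intersecting with a coordinate hyperplane, and translating through the linear isomorphism $\tau$ — are routine once the semigroup identification is in hand.
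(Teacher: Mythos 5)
Your proposal is correct and follows the same skeleton as the paper's proof: the decomposition of the graded pieces of $(I')^t=(I^t)'$ by powers of $x_{n+1}$, the resulting identification of the graded semigroup $U_{I'}$ with $U_I\times\Zbb^{\ge 0}$ under a linear change of coordinates, and the passage through cones, the slice $\{t=1\}$, and the map $\tau$. The one genuine difference is how the key identity $v'\big((I^t)'_s\big)=\bigcup_e\big(v((I^t)_{s-e})\times\{e\}\big)$ (the analogue of~\eqref{eq:valp}) is established. The paper proves only the easy inclusion directly and then forces equality by a counting argument: by \cite[Lemma~1.3]{MR2571958} the value set of a finite-dimensional space has cardinality equal to its dimension, the union on the right is disjoint, and the directness of the decomposition~\eqref{eq:dehds} makes the two cardinalities match. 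You instead propose a direct valuation-theoretic argument, and this works --- in fact more simply than you suggest: no genericity is needed. For any nonzero $f\in (I^t)'_s$, write $f=\sum_e x_{n+1}^e f_e$ with $f_e\in (I^t)_{s-e}$ and let $e_0$ be minimal with $f_{e_0}\ne 0$; then by the very definition of $v'$ one has $v'(f)=\big(v(f_{e_0}),e_0\big)$, which lies in the union, so the inclusion you worry about holds for \emph{every} element, not just a general one, and the appeal to ``equality generically'' in the ultrametric inequality can be replaced by this one-line computation (or, if you prefer the axiomatic route, by noting that the values of the nonzero pieces $x_{n+1}^e f_e$ are pairwise distinct in the last coordinate, so the minimum is attained uniquely). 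One small caution: your parenthetical that ``the cone generated by a product is the product of the cones'' is false for arbitrary subsets (e.g.\ $\{1\}\times\{1\}\subseteq\Rbb^2$); here the equality $\Sigma(U_I\times\Zbb^{\ge 0})=\Sigma(U_I)\times\Rbb^{\ge 0}$ does hold, but it uses that the cone is closed and that the direction $(0,\dots,0,1)$ is a limit of $\tfrac1m(u,m)$ with $u\in U_I$ fixed and $m\to\infty$, so it deserves the short recession-direction argument rather than a general principle. With those two adjustments your plan yields a complete proof, essentially equivalent in strength to the paper's but trading its dimension count for a direct computation with $v'$.
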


\begin{proof}
The key ingredients in the construction of $\Delta(I)$, $\Delta(I')$ are the sets
\begin{align*}
U_I &:=\{(\ua, s,t)\in \Rbb^n\times \Rbb^2 \,|\, s\in \Zbb^{\ge 0}, t\in \Zbb^{\ge 0}, 
\ua\in v({I^t}_s)\} \\
U_{I'} &:=\{(\ua', s,t)\in \Rbb^{n+1}\times \Rbb^2 \,|\, s\in \Zbb^{\ge 0}, t\in \Zbb^{\ge 0}, 
\ua'\in v'({{I'}^t}_s)\}
\end{align*}
(cf.~\eqref{eq:dUI}). Here $\ua=(a_1,\dots, a_n)$, $\ua'=(a_1,\dots, a_{n+1}) = (\ua,a_{n+1})$.
Consider the map
\[
\lambda: (\Rbb^n\times \Rbb^2)\times \Rbb \to (\Rbb^{n+1}\times \Rbb^2)
\]
given by
\[
\lambda((\ua, j, t),a_{n+1})= ((\ua,a_{n+1}), a_{n+1}+j,t)
\]
The lemma will follow easily from the following statement.
\begin{claim}\label{cla:Un}
\[
U_{I'} = \lambda(U_I\times \Zbb^{\ge 0})\quad.
\]
\end{claim}

In order to verify the claim, note first that 
\begin{equation}\label{eq:dehds}
I'_j = \bigoplus_{i=0}^j x_{n+1}^{j-i} I_i \quad.
\end{equation}
This identity has a counterpart for valuations. Since $I'_j \supseteq 
x_{n+1}^{j-i} I_i$,
clearly
\begin{equation}\label{eq:vun}
v'(I'_j)\supseteq (v(I_j),0)\cup (v(I_{j-1}),1)\cup \cdots 
\cup (v(I_0),j)\quad.
\end{equation}
This union is disjoint, and for every $i$,
\[
|(v(I_{j-i}),i)| = \dim I_{j-i}
\]
by~\cite[Lemma 1.3]{MR2571958};
therefore the cardinality of the right-hand side in~\eqref{eq:vun} equals
$\sum_{i=0}^j \dim I_{j-i}$,
and this equals $\dim I'_j$ since the sum in~\eqref{eq:dehds} is direct. It follows
that 
\[
\sum_{i=0}^j |(v(I_{j-i}),i)| = |v'(I'_j)|\quad,
\]
and this shows that the inclusion in~\eqref{eq:vun} is an equality:
\begin{equation}\label{eq:valp}
v'(I'_j)=(v(I_j),0)\cup (v(I_{j-1}),1)\cup \cdots 
\cup (v(I_0),j)\quad.
\end{equation}

Now apply these considerations to powers of $I$ and of its extension $I'$.
By~\eqref{eq:valp},
\[
U_{I'} = \{(\ua', s, t)=((\ua,a_{n+1}), s, t) \in \Zbb^{n+1}\times\Zbb^2 \,|\, s\ge 0, t\ge 0,a_{n+1}\ge 0,
\ua\in v({I^t}_{s-a_{n+1}}) \}\quad.
\]

Claim~\ref{cla:Un} is simply a restatement of this identity.
\qede\smallskip

Since taking closed convex cones is preserved by linear maps, and $\lambda$ 
maps $t=1$ to $t=1$, Claim~\ref{cla:Un} implies that
\[
\uDelta(I') = \lambda|_{t=1}(\uDelta(I)\times \Rbb^{\ge 0})\quad.
\]
Lemma~\ref{lem:ext} follows by applying the isomorphisms~$\tau$ used in
Definition~\ref{def:NO}: we have the commutative diagram
\[
\xymatrix@C=50pt{
(\Rbb^n\times \Rbb^1)\times \Rbb^1 \ar[r]^-{\lambda|_{t=1}} \ar[d]_-{\tau\times\id} 
& \Rbb^{n+1}\times \Rbb^1 \ar[d]^-\tau \\
\Rbb^{n+1}\times \Rbb^1 \ar[r]^\cong & \Rbb^{n+2}
}
\]
where the bottom map is simply $((a_0,\dots, a_n),a_{n+1})\mapsto (a_0,\dots, a_{n+1})$,
and this gives the stated identification $\Delta(I)\times \Rbb^{\ge 0} = \Delta(I')$.
\end{proof}

\subsection{}
We can now complete the proof of Theorem~\ref{thm:segre2}. As we argued in~\S\ref{ss:arguseg2},
it suffices to prove~\eqref{eq:key}. We will in fact show that the two integrals appering 
in~\eqref{eq:key} agree modulo any power of $t$, if the valuation~$v'$ is used for the
construction of $\Delta(I')$ (and it will then follow that they must be equal for any choice
of valuation).

This is a straightforward consequence of Lemma~\ref{lem:ext}. Since $\Delta(I') = \Delta(I)
\times \Rbb^{\ge 0}$, 
\begin{align*}
\int_{\Delta(I')} &\frac{(n+2)!\, t^{n+2}}{(1+(a_0+\cdots +a_{n+1}) t)^{n+3}}\, 
\, d a_0\cdots da_{n+1} \\
&=\int_{\Delta(I)} \left(\int_0^\infty \frac{(n+2)!\, t^{n+2}}{(1+(a_0+\cdots +a_{n+1}) t)^{n+3}} 
da_{n+1}\right) d a_0\cdots da_n \\
&=\int_{\Delta(I)} \frac{(n+1)!\, t^{n+1}}{(1+(a_0+\cdots +a_n) t)^{n+2}} \, d a_0\cdots da_n
\end{align*}
as needed.
\qed

\begin{remark}
In the case of ideals of $\Cbb[x_0,\dots, x_n]$ generated by monomials in the variables~$x_i$,
Theorem~\ref{thm:segre2} recovers a `numerical' form of the main result of~\cite{MR3576538}.
Indeed, as seen in~\S\ref{ss:monom}, in this case $\Delta(I)$ is the ordinary Newton polytope
of $I$, and the statement of Theorem~\ref{thm:segre1} follows by setting $X_1=\cdots = X_n=t$ 
(which amounts to taking degrees) in the integral appearing in~\cite[Theorem 1.1]{MR3576538}.
As this result is independent of the number of variables, the integral in fact computes
the Segre zeta function, yielding the monomial case of~Theorem~\ref{thm:segre2}.
\qede\end{remark}

\subsection{}
To summarize, let $f_0,\dots, f_r$ be homogeneous polynomials in $\le n$ variables.
These polynomials generate an ideal $I\subseteq \Cbb[x_0,\dots, x_n]$, and we have proved 
that the integral
\[
\int_{\Delta(I)} \frac{(n+1)!\, t^{n+1}}{(1+(a_0+\cdots +a_n) t)^{n+2}} \, d a_0\cdots da_n
\]
is {\em independent of~$n$,\/} and in fact 
\[
1- \int_{\Delta(I)} \frac{(n+1)!\, t^{n+1}}{(1+(a_0+\cdots +a_n) t)^{n+2}} \, 
d a_0\cdots da_n=\zeta_I(t) 
\]
is a rational function, with various properties studied in~\cite{MR3709134}---for example,
this function can only have poles at $-1/{d_i}$, where the integers $d_i$ belong to the degree 
sequence for $I$, and its numerator is a polynomial with nonnegative coefficients. 
Some of the known properties of $\zeta_I(t)$ can probably be ascribed to properties of the
Newton-Okounkov body $\Delta(I)$. It would be interesting to explore this connection.


\newpage

\end{document}